\documentclass[12pt,a4paper]{article}

\usepackage{amsmath,amssymb,amsthm}
\usepackage[margin=27mm]{geometry}
\usepackage[hidelinks]{hyperref}
\usepackage{enumitem}

\newtheorem{theorem}{Theorem}[section]
\newtheorem{lemma}[theorem]{Lemma}
\newtheorem{proposition}[theorem]{Proposition}
\newtheorem{corollary}[theorem]{Corollary}
\theoremstyle{definition}
\newtheorem{definition}[theorem]{Definition}

\newtheorem{problem}[theorem]{Problem}
\theoremstyle{remark}

\newcommand{\zr}{z_{\mathrm{RL}}}
\newcommand{\zw}{z_{\mathrm{wL}}}
\newcommand{\rw}{\mathrm{(RW3^{+})}}
\newcommand{\RR}{\mathbb{R}}
\newcommand{\supp}{\operatorname{supp}}
\newcommand{\sos}{\operatorname{SOS}}
\newcommand{\bsr}{\operatorname{BSR}}
\newcommand{\ip}[2]{\langle #1,#2\rangle}
\newcommand{\floor}[1]{\left\lfloor #1\right\rfloor}
\newcommand{\doi}[1]{\href{https://doi.org/#1}{doi:\nolinkurl{#1}}}
\newcommand{\arxiv}[1]{\href{https://arxiv.org/abs/#1}{arXiv:\nolinkurl{#1}}}

\begin{document}

\title{Recursive-Line Zarankiewicz Numbers with Four Columns}
\author{Zhiwei Chen\footnote{School of Mathematical Sciences, South China Normal University, Guangzhou 510631, China (\texttt{email:~javenchen2002@163.com}).} \and
  Yannan Chen\footnote{School of Mathematical Sciences, South China Normal University, Guangzhou 510631, China (\texttt{email:~ynchen@scnu.edu.cn}). This author was supported by the National Natural Science Foundation of China (12671423, 12471351).}}
\date{\today}
\maketitle

\begin{abstract}
  The recursive-line Zarankiewicz number is defined on augmentations of extremal $C_4$-free bipartite graphs. It maximizes the total number of selected one-edges and two-edges whose associated sum-of-squares representation has a recursive irreducibility certificate. We study the four-column case under the strengthened definition of L\"ofberg and Qi. Exact verification and finite exclusion give a table for $2\le m\le20$: eighteen values are exact, while
  $37\le\zr(14,4)\le38$. Our main result is
  \[
    \zr(m,4)=\floor{\frac{5m+6}{2}}\qquad(m\ge15).
  \]
  The upper bound follows from the classical identity $z(m,4)=m+6$ and a cell count. For the lower bound, one $16\times4$ seed generates every even order by extending two parallel chains. Deleting a fixed row gives every odd order starting at fifteen. We prove admissibility by a grounded path-transport argument and a column-wise induction. The appendix gives a complete elementary certificate for the seed. The same formula holds for the second-order number $z_2(m,4)$. For $m\ge15$, every extremal configuration has no holes when $m$ is even and exactly one hole when $m$ is odd.

  \medskip\noindent\textbf{Keywords:} Zarankiewicz number; extremal bipartite graph; sum-of-squares rank; double-chain construction.

  \medskip\noindent\textbf{MSC2020:} 05C35; 90C35.
\end{abstract}

\newpage

\section{Introduction}\label{sec:intro}

The recursive-line Zarankiewicz problem is an optimization problem at the interface of extremal combinatorics, polynomial optimization, and semidefinite modeling. An augmented bipartite configuration encodes a sparse sum-of-squares (SOS) decomposition of a biquadratic polynomial: one-edges represent monomial squares, whereas two-edges represent squares of two-term bilinear forms. Thus the quantity being maximized is simultaneously a combinatorial packing objective and the size of a structured SOS/Gram decomposition subject to coefficient-matching, disjoint-support, and recursive-rank constraints. This perspective is consistent with recent work on semidefinite complementarity, trust-region global optimization, and tensor-norm regularization \cite{GaoQuCui25,LiHaoXuGaoLi25,ZhaoZhouFan25}. In our setting, the optimization viewpoint is not merely motivational: it guides the exact finite searches, the symmetry reductions, the extremal upper bound, and the certificate-replay arguments that establish the infinite four-column formula.

The classical Zarankiewicz problem asks for the largest number of edges in a bipartite graph that contains no $K_{2,2}$, or equivalently no $4$-cycle. In grid form, it asks for the largest number of occupied cells in an $m\times n$ array with no fully occupied $2\times2$ subarray. We denote this number by $z(m,n)$. The basic theory goes back to Zarankiewicz \cite{Zar51}, K\H{o}v\'ari, S\'os and Tur\'an \cite{KST54}, \v{C}ul\'ik \cite{Culik56}, and Reiman \cite{Reiman58}. Later work improved finite and asymptotic bounds; see, for example, \cite{Roman75,Furedi96,FS13,CHM24,DGH26}. When the number of columns is fixed, the extremal graphs have strong structure. In particular, \v{C}ul\'ik's result \cite{Culik56} gives the four-column skeleton used in this paper.

The augmented problem is motivated by sums of squares of biquadratic forms. A one-edge represents a monomial square, while a two-edge represents the square of a two-term bilinear form. The selected cells therefore encode a sparse SOS representation. The goal is to use as many displayed squares as possible while keeping the representation irreducible. This connects the combinatorial problem with coefficient identities and Gram-rank questions. Background on polynomial sums of squares can be found in \cite{Choi75,CLR95}. The augmented-graph model and several related constructions were developed in \cite{QCXrank26,QCXaug26,QCXweak26,QCXinc26}.

The parameter studied here is based on a recursive certificate for these coefficient identities. Qi, L\"ofberg and Chen \cite{QLC26} studied weak limited augmented numbers in the three-column case. L\"ofberg and Qi \cite{LQrevision26} introduced the second-order number $z_2$ and the recursive-line number $\zr$. Their strengthened closure replaces a literal empty-cross-cell condition by finitely many line and rectangle rules. In the formulation used here, they prove
\begin{equation}\label{eq:hierarchy}
\bsr(m,n)\ge z_2(m,n)\ge\zr(m,n)\ge\zw(m,n)\ge z(m,n),
\end{equation}
and the exact three-column identity
\begin{equation}\label{eq:n=2}
  \zr(m,3)=z_2(m,3)=2m, \qquad (m\ge3),
\end{equation}
as well as several small four-column values. The eventual behavior for four columns remains open in that manuscript.

Throughout the paper we use \cite[Definition~4.11]{LQrevision26}: the configuration is simple, its one-edge graph is $C_4$-free, and it satisfies strengthened recursive admissibility. 
This paper has two main contributions. First, we give certified data for $2\le m\le20$, including all values for $8\le m\le13$ and the bounds $37\le\zr(14,4)\le38$. Second, the zero-or-one-hole pattern for $15\le m\le20$ leads to an explicit infinite family and the following theorem.

\begin{theorem}\label{thm:main}
Under Definition~\ref{def:zrl}, for every integer $m\ge15$,
\begin{equation}\label{eq:main}
\zr(m,4)=z_2(m,4)=\floor{\frac{5m+6}{2}}.
\end{equation}
Every extremal configuration for $\zr(m,4)$ has $H=0$ holes when $m$ is even and $H=1$ hole when $m$ is odd.
\end{theorem}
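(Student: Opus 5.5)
The proof has two halves: an upper bound for $z_2(m,4)$ and a matching lower bound for $\zr(m,4)$. By the hierarchy \eqref{eq:hierarchy}, $z_2(m,4)\ge\zr(m,4)$, so these two bounds force equality throughout \eqref{eq:main}.

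\textbf{Upper bound.} I would count cells. Let a configuration have $e_1$ one-edges and $e_2$ two-edges, with objective $e_1+e_2$. The one-edge graph is $C_4$-free, so \v{C}ul\'ik's identity $z(m,4)=m+6$ gives $e_1\le m+6$. Each two-edge occupies two cells. By simplicity, the cells used by one-edges and two-edges are disjoint, so
\[
e_1+2e_2+H=4m,
\]
where $H$ is the number of empty cells (holes). Then
\[
e_1+e_2=\frac{e_1+4m-H}{2}\le\frac{5m+6-H}{2}.
\]
Integrality yields $\floor{(5m+6)/2}$. Equality forces $e_1=m+6$, so the one-edges form an extremal $C_4$-free graph. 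It also forces $H=0$ when $m$ is even. When $m$ is odd, $5m+6-H$ must be even, so $H$ is odd, and $H\le1$ is needed to attain the floor; hence $H=1$. This settles the hole statement once the lower bound is in hand.

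I would check that the $z_2$ definition imposes at least simplicity and the $C_4$-freeness of the one-edge graph, so the same count applies to $z_2$.

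\textbf{Lower bound.} I would take the $16\times4$ seed certified in the appendix. It has value $46=\floor{(5\cdot16+6)/2}$, $H=0$, and one-edge graph \v{C}ul\'ik-extremal. The even-order family is built by inserting two rows at a time along two parallel chains. Each insertion adds two one-edges and then three two-edges filling the remaining $8-2=6$ cells, for a gain of $5$ per step. Each insertion also keeps the one-edge graph $C_4$-free, which only requires the two new one-edges in each new row to avoid creating rectangles with the existing degree pattern.

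For odd $m\ge15$, I would delete a fixed row of the even configuration at order $m+1$. That row should contain one one-edge and one two-edge with both cells in that row, plus one cell of a two-edge spanning another row, which then becomes a hole. The value drops by $3$ and $H=1$, matching $\floor{(5m+6)/2}$.

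\textbf{Main obstacle: admissibility.} The hard step is proving that every configuration in the family satisfies strengthened recursive admissibility under Definition~4.11 of \cite{LQrevision26}. Simplicity and $C_4$-freeness are routine.

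I would first try induction on the chain length. The recursive certificate should reduce a configuration to the seed by peeling off the newest chain rows. The line and rectangle rules close these rows using only cells already grounded by earlier rows. This is the ``grounded path-transport'' idea: each new two-edge lies on a path of certified cells ending at a grounded cell of the seed, and the closure propagates along that path column by column.

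The delicate points are the following:
\begin{itemize}[nosep]
\item showing that the column-wise propagation never needs an empty cross cell;
\item showing that the deleted-row (odd) case still has a grounded endpoint after the hole is created.
\end{itemize}
I would handle the odd case by choosing the deleted row far from the chain ends, so the seed's appendix certificate still applies verbatim to the remaining rows.
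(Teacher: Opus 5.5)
Your upper bound is the paper's cell bound, and the hole pattern follows as you say, with one correction. For odd $m$, invoke the limited condition $|E_1|=z(m,4)=m+6$ (part of both definitions) before the parity count, because the inequality alone would also allow $e_1=m+5$, $H=0$. Also, the seed has value $43$, not $46$.

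The genuine gap is the lower bound, which is the real content of the theorem. You never fix the chain rule, and the induction you sketch is not well-founded, because the configurations are not nested:
\begin{itemize}
\item Inserting a block re-routes the terminal pairs. With $w$ the last row, $A_t=\{(u_t,1),(w,3)\}$ in $K_t$ becomes $\{(u_t,1),(u_{t+1},3)\}$ in $K_{t+1}$.
\item The seed is not a sub-configuration of any $K_t$, $t\ge1$. Its pairs $B_2,B_4$ are split into four distinct port labels $A_0,A_t,S_0,S_t$, so the cell pairs that formed $B_2,B_4$ now have prescribed value $0$ rather than $1$.
\end{itemize}
Peeling off rows therefore never lands on a configuration already known to be admissible. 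The appendix certificate can only be \emph{replayed} (Lemma~\ref{lem:split-replay}) after every relation between the ports and all old labels, port--port included, has been proved independently.

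Dependencies also do not run from old rows to new ones. In the paper, $B_9$ and $B_8$ are resolved only after the entire $S$-path (through $S_t$). The $A$-path is then resolved right to left starting from them, and $B_3$ last, via $A_0,R_1,A_1$.

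Finally, your sketch addresses only resolution, whereas $\rw$ demands a certified orthogonality for each of the $\binom{43+5t}{2}$ pairs of distinct edges. The $\Theta(t^2)$ chain--chain obligations need a uniform argument. The paper uses grounded path transport, in which two rectangles lower the index gap by two until a same-row zero is reached. The old--new obligations need the fourth-column propagation: an induction on the block index through the sixteen old rows, driven by $[A_i,F_r]=-[R_i,X_{r1}]=[S_i,F_r]$, then extended to all columns. Your concern about empty cross cells is beside the point, since $K_t$ has no holes at all.

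The odd case rests on a false claim. In the double-chain family, every row of the shape you describe contains a row-degenerate pair, so it lies in the fixed core; the paper deletes old row $12$, which reads $(B_{19},B_{17},B_{19},x_{12,4})$. The seed certificate then does not apply verbatim. For instance, the core initialization resolves $B_{17}$ through $B_{19}$ and $B_{15}$ through $B_{17}$, and many appendix chains pass through that row.

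The missing tool is the hereditary deletion lemma (Lemma~\ref{lem:delete}). After deleting complete selected edges, each companion diagonal that met a deleted cell now contains a hole and is certified zero automatically. Every rectangle step of the old derivation therefore survives with the new prescribed values, and the emptied row can be removed. Applied to the already certified $K_t$, not merely to the seed, this yields all odd orders $m\ge15$.
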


The lower-bound construction extends two parallel chains. Each step adds two rows, two one-edges, and three net two-edges. Deleting one fixed row gives the odd-order family, starting at order fifteen. The proof combines a distance-decreasing transport lemma, a finite seed certificate, and a column-wise induction. Exhaustive computation is used only for the small-order exclusions. Equality with $z_2$ follows from the hierarchy and the same cell bound; it does not determine $\bsr(m,4)$.

Section~\ref{sec:def} gives the definitions, the closure rules, and a deletion lemma. Section~\ref{sec:finite} presents the finite computation, the table for $2\le m\le20$, and sample witnesses. Section~\ref{sec:general} proves the upper bound and constructs the infinite family. Section~\ref{sec:conclusion} discusses $m=14$, the stabilization threshold, and further questions.

\section{Recursive-line admissibility}\label{sec:def}

This section recalls the augmented-grid framework and the recursive-line admissibility criterion from L\"ofberg and Qi \cite[Definitions~4.1, 4.2, 4.5 and 4.11]{LQrevision26}. The terminology and definitions below are included for completeness.

For a positive integer $r$, let $[r]=\{1,\ldots,r\}$. Identify the edges of the complete bipartite graph with parts $[m]$ and $[n]$ with the cells of $[m]\times[n]$. A set $E_1$ of \emph{one-edges} is therefore an ordinary bipartite graph. A \emph{two-edge} is an unordered pair $\{p,q\}$ of distinct cells. If $p=(i,j)$ and $q=(k,\ell)$, it is row-degenerate when $i=k$, column-degenerate when $j=\ell$, and nondegenerate when $i\ne k$ and $j\ne\ell$. Two nondegenerate two-edges are complementary when they are the two diagonals of the same genuine rectangle.

For a one-edge $e=(i,j)$, set $\supp(e)=\{(i,j)\}$; for a two-edge $e=\{p,q\}$, set $\supp(e)=\{p,q\}$. An augmented configuration is $G=([m],[n],E_1\cup E_2)$. Its simplicity condition (S) says that supports of distinct selected edges are disjoint. Thus a cell cannot be used twice, although different selected edges may meet the same row or column vertex. Write
\[
\Omega=E_1\cup\bigcup_{e\in E_2}e,
\qquad H=mn-|\Omega|,
\qquad N(G)=|E_1|+|E_2|.
\]
A two-edge uses two cells but contributes only one to $N(G)$. The configuration is \emph{limited} if $|E_1|=z(m,n)$. This is an equality requirement: one may not replace one-edges by additional two-edges while leaving the same parameter unchanged.

The one-edge graph is $C_4$-free precisely when no genuine rectangle has all four corners in $E_1$. This does not prohibit a rectangle whose corners belong to two-edges. We depict each one-edge by $\bullet$, each hole by $\circ$, and each two-edge by a letter or integer occurring exactly twice. Different black dots always denote different one-edges.

\begin{figure}[ht]
\centering
\begin{tabular}{cccc}
$\begin{array}{|cc|}\hline A&A\\\hline\end{array}$&
$\begin{array}{|c|}\hline A\\A\\\hline\end{array}$&
$\begin{array}{|cc|}\hline A&\circ\\\bullet&A\\\hline\end{array}$&
$\begin{array}{|cc|}\hline A&B\\B&A\\\hline\end{array}$\\[5pt]
row-degenerate&column-degenerate&hole-assisted&complementary pair
\end{tabular}
\caption{Local grid patterns illustrating resolution of selected two-edges. The first two use the line rule. The third uses rectangle transfer from a diagonal containing a hole. In the last pattern, the two $A$-cells are identified and the two $B$-cells are identified, but the two edges are not identified with each other. The patterns are local illustrations, not assertions that each displayed subgrid is itself limited.}\label{fig:rules}
\end{figure}

For distinct cells $p,q$, define
\[
\delta(p,q)=\begin{cases}1,&\{p,q\}\in E_2,\\0,&\text{otherwise}.\end{cases}
\]
A genuine rectangle has diagonals $\{p,q\}$ and $\{r,s\}$, where
\[
p=(i,j),\quad q=(k,\ell),\quad r=(i,\ell),\quad s=(k,j),
\qquad i\ne k,\quad j\ne\ell.
\]
Following \cite[Definitions 4.1 and 4.5]{LQrevision26}, construct the least equivalence relation $\sim$ on $\Omega$ and the least symmetric certified-orthogonality relation $\perp_R$ closed under the following rules.
\begin{itemize}
  \item \textbf{Line rule.} For two occupied cells on the same row or column, certify their prescribed value: if $\delta(p,q)=0$, add $p\perp_R q$; if $\delta(p,q)=1$, add $p\sim q$.
  \item \textbf{Saturation rule.} If $p\sim p'$, $q\sim q'$, and $p'\perp_R q'$, then add $p\perp_R q$.
  \item \textbf{Rectangle transfer.} Suppose the companion diagonal $\{r,s\}$ is already certified to have its prescribed value. For value zero, this means that at least one of its cells is a hole, or that $r\perp_R s$. For value one, both cells must be occupied and $r\sim s$. Then certify the target diagonal: if $p,q$ are occupied and $\delta(p,q)=0$, add $p\perp_R q$; if $\delta(p,q)=1$, add $p\sim q$.
  \item \textbf{Complementary-pair rule.} If both diagonals of a genuine rectangle are selected two-edges $\{p,q\}$ and $\{r,s\}$, identify the two halves of each diagonal simultaneously: $p\sim q$ and $r\sim s$.
\end{itemize}
The closure is a least fixed point: only consequences reached by finitely many applications of these rules are certified. In particular, an algebraic argument involving an ungrounded cycle of equations is not automatically a valid closure certificate.

\begin{definition}[Strengthened recursive admissibility, see {\cite[Definitions 4.2 and 4.5]{LQrevision26}}]\label{def:rw}
A simple augmented configuration satisfies $\rw$ if its strengthened closure has the following properties: every selected two-edge has its two halves identified; distinct selected edges determine distinct equivalence classes; and, for any distinct selected edges $e,f$, there exist $p\in\supp(e)$ and $q\in\supp(f)$ such that $p\perp_R q$.
\end{definition}

\begin{definition}[Recursive-line Zarankiewicz number]\label{def:zrl}
In the sense of \cite[Definition 4.11]{LQrevision26},
\begin{equation}\label{eq:def-zrl}
\zr(m,n)=\max_G\left\{|E_1|+|E_2|:\
\begin{array}{l}
|E_1|=z(m,n),\ G\text{ satisfies (S)},\\
G_1\text{ is }C_4\text{-free},\ G\text{ satisfies }\rw
\end{array}\right\}.
\end{equation}
Here $G_1=([m],[n],E_1)$, and row- and column-degenerate two-edges are allowed. No additional dependency-acyclicity or opposite-one-edge restriction is imposed.
\end{definition}

Associate with $G$ the doubly simple biquadratic form
\begin{equation}\label{eq:polynomial}
P_G(x,y)=\sum_{(i,j)\in E_1}(x_i y_j)^2+
\sum_{\{(i,j),(k,\ell)\}\in E_2}(x_i y_j+x_k y_\ell)^2.
\end{equation}
The SOS rank $\sos(P_G)$ is the minimum number of squares of real bilinear forms in an SOS representation. In any such representation with $r$ squares, let $v_p\in\RR^r$ be the vector of coefficients of the monomial associated with cell $p$. Coefficient comparison gives
\begin{align}
\|v_p\|^2&=1 \quad(p\in\Omega),& &v_p=0 \quad(p\notin\Omega),\label{eq:unit}\\
\ip{v_p}{v_q}&=\delta(p,q) &&\text{for distinct cells on a line},\label{eq:line}\\
\ip{v_p}{v_q}+\ip{v_r}{v_s}&=\delta(p,q)+\delta(r,s)
&&\text{for the two diagonals of a rectangle}.\label{eq:rectangle}
\end{align}
For a complementary selected pair, the right-hand side of \eqref{eq:rectangle} is two. Each inner product is at most one, so both equal one and both selected edges resolve. This explains the last pattern of Figure~\ref{fig:rules}.

\begin{proposition}[Soundness; \cite{LQrevision26}]\label{prop:sound}
If $G$ satisfies $\rw$, then $$\sos(P_G)=N(G).$$
\end{proposition}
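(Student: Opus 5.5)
The plan is to prove the two inequalities $\sos(P_G)\le N(G)$ and $\sos(P_G)\ge N(G)$ separately. The first is immediate: \eqref{eq:polynomial} is itself a representation of $P_G$ as a sum of exactly $N(G)=|E_1|+|E_2|$ squares of real bilinear forms. All the work is in the lower bound. For it, fix an arbitrary SOS representation of $P_G$ with $r$ squares, and form the coefficient vectors $v_p\in\RR^r$. These satisfy \eqref{eq:unit}, \eqref{eq:line} and \eqref{eq:rectangle}. The aim is to exhibit $N(G)$ pairwise orthogonal unit vectors among them, which forces $r\ge N(G)$.

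The central step is a soundness invariant for the closure. Every certified relation is realized by the vectors:
\[
p\sim q\ \Longrightarrow\ v_p=v_q,\qquad p\perp_R q\ \Longrightarrow\ \ip{v_p}{v_q}=0 .
\]
I will prove this by induction on the length of a derivation in the least fixed point, checking each rule.
\begin{itemize}
\item \textbf{Line rule.} Equation \eqref{eq:line} gives $\ip{v_p}{v_q}=\delta(p,q)$. In the case $\delta=1$, both vectors are unit vectors with inner product $1$, so equality in Cauchy--Schwarz gives $v_p=v_q$.
\item \textbf{Saturation rule.} This is immediate from the inductive hypothesis: $v_p=v_{p'}$, $v_q=v_{q'}$ and $\ip{v_{p'}}{v_{q'}}=0$.
\item \textbf{Rectangle transfer.} By induction the companion diagonal realizes its prescribed value, so $\ip{v_r}{v_s}=\delta(r,s)$. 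If a cell of the companion is a hole, its vector is $0$ by \eqref{eq:unit}, and the value is still $0$. Then \eqref{eq:rectangle} yields $\ip{v_p}{v_q}=\delta(p,q)$. In the case $\delta=1$, Cauchy--Schwarz again upgrades this to $v_p=v_q$.
\item \textbf{Complementary-pair rule.} The right-hand side of \eqref{eq:rectangle} equals $2$. Each inner product is at most $1$ by Cauchy--Schwarz, so both equal $1$, and hence $v_p=v_q$ and $v_r=v_s$.
\end{itemize}
Since only finitely many rule applications are ever used, the induction covers the whole closure. I expect this rule-by-rule verification to be the main obstacle, and it is where care is needed. One must be sure that every hypothesis ``certified to have its prescribed value'' really translates into an exact inner-product equation. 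In particular, the hole case must use $v_p=0$ and not merely orthogonality. The argument must also never use an ungrounded cyclic system of equations, which is exactly why the closure is defined as a least fixed point.

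Now invoke $\rw$. Every selected two-edge $\{p,q\}$ has $p\sim q$, so $v_p=v_q$. Hence each selected edge $e$ determines a single unit vector $u_e$: for a two-edge this is the common vector of its two cells, and for a one-edge it is $v_p$. For distinct selected edges $e,f$, condition $\rw$ supplies $p\in\supp(e)$ and $q\in\supp(f)$ with $p\perp_R q$. The invariant then gives $\ip{u_e}{u_f}=\ip{v_p}{v_q}=0$. Thus $\{u_e\}$ is an orthonormal family of $N(G)$ vectors in $\RR^r$, so $r\ge N(G)$. Since the representation was arbitrary, $\sos(P_G)\ge N(G)$, and together with the first bound this gives $\sos(P_G)=N(G)$. (The condition that distinct edges give distinct classes is not needed separately, since orthogonality already implies it.)
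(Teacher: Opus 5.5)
Your proof is correct and follows the argument the paper relies on. The paper does not reprove this proposition; it cites \cite{LQrevision26} and records only the coefficient identities \eqref{eq:unit}--\eqref{eq:rectangle} and the complementary-pair observation, which are exactly the ingredients of your rule-by-rule soundness induction followed by the orthonormal-family bound $r\ge N(G)$ (the reflexive, symmetric and transitive closure steps for $\sim$, and symmetry of $\perp_R$, are trivially preserved by the same induction).
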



The second-order number $z_2(m,n)$ \cite[Definition 3.1]{LQrevision26} is the maximum $N(G)$ over simple, limited configurations with $C_4$-free one-edge graph for which \eqref{eq:polynomial} is irreducible, whether or not the chosen closure certifies it. Proposition~\ref{prop:sound} gives $z_2\ge\zr$.

\begin{lemma}[Deletion of complete selected edges]\label{lem:delete}
If an augmented configuration satisfies $\rw$, deleting any collection of complete selected edges, together with all cells in their supports, preserves $\rw$ on the retained configuration. This statement concerns admissibility; the limited equality must be checked separately. An entirely empty row or column can subsequently be removed without affecting admissibility.
\end{lemma}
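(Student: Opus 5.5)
The idea is to show that deletion can only shrink the certified closure by removing exactly what no longer matters. Let $G$ satisfy $\rw$, and let $D$ be a collection of selected edges whose supports we delete. Write $G'$ for the retained configuration, with occupied set $\Omega'=\Omega\setminus\bigcup_{e\in D}\supp(e)$. Retained cells keep their status: a cell that was a one-edge or half of a retained two-edge stays so. Deleted cells become holes. The $\delta$-values on pairs of retained cells are unchanged, since no retained two-edge is touched. Simplicity is inherited trivially.

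The key step is a monotonicity claim. Let $\sim$ and $\perp_R$ be the closure for $G$, and $\sim'$ and $\perp_R'$ the closure for $G'$. I will show that for all retained cells $p,q$, the relation $p\sim q$ implies $p\sim' q$, and $p\perp_R q$ implies $p\perp_R' q$. I would prove this by induction along a finite derivation in the least-fixed-point closure for $G$. To make the induction go through, I strengthen the hypothesis to cover pairs that involve a deleted cell:
\begin{itemize}
\item if $p\perp_R q$ and at least one of $p,q$ is deleted, the pair counts as certified with value zero in $G'$, because a deleted cell is a hole;
\item if $p\sim q$ with $p$ deleted and $q$ retained, I must rule out that such relations feed later steps, or handle them explicitly.
\end{itemize}

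Next I check each rule in turn.
\begin{itemize}
\item \textbf{Line rule.} It transfers directly, because $\delta$ is unchanged on retained cells.
\item \textbf{Rectangle transfer.} If the companion diagonal was certified with value zero, it remains certified with value zero in $G'$: either it still has a hole, gains a hole, or keeps its $\perp_R'$ relation by induction. If the companion has value one, it is a selected two-edge with $r\sim s$. It is either fully retained, so $r\sim' s$ by induction, or fully deleted, in which case both cells become holes. In the latter case the companion value in $G'$ is zero, and the target value is unchanged only if $\delta(p,q)$ agrees with the new requirement. This is exactly the delicate point.
\item \textbf{Saturation and complementary-pair rules.} These have the same flavour.
\end{itemize}

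The main obstacle is that $\sim$ chains may pass through deleted cells. For example, $p\sim x\sim q$ with $x$ deleted, or the value-one rectangle case above where the deleted companion changes the equation \eqref{eq:rectangle}. To control this, I would first establish an invariant of the original closure: every $\sim$-class is either a singleton or exactly the support of one selected two-edge. This follows from the $\rw$ requirements that halves are identified and that distinct edges give distinct classes. It implies that no $\sim$ relation links a retained cell to a deleted one. For the rectangle case, I would argue that a rectangle containing a whole deleted two-edge as one diagonal and a retained pair $\{p,q\}$ as the other cannot occur with $\delta(p,q)=1$. Both diagonals being selected would make it a complementary pair, and $\{p,q\}$ is then resolved directly by the complementary-pair rule, or by the line and other rules, inside $G'$. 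If $\delta(p,q)=0$, the derivation target is $p\perp_R q$, and this needs the companion's value, which is now zero. I would handle this by checking that \eqref{eq:rectangle} with companion value zero still yields $\perp$, since a hole on the companion certifies it immediately.

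With monotonicity in hand, the three $\rw$ conditions for retained edges follow at once. Halves of retained two-edges remain identified. Distinct retained edges stay in distinct classes, since $\sim'\subseteq\sim$ on retained cells: the closure of $G'$ is also contained in the restriction of the closure of $G$, by soundness-style reasoning or by the class invariant. Witnessing $\perp_R'$ pairs persist from the $\perp_R$ witnesses, which lie in the supports of retained edges. Finally, an empty row or column contains no occupied cells. Every rule application involving it sees only holes, which supply value zero and can be reproduced, so removing the line changes nothing in the closure on the remaining cells.
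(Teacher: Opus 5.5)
Your strategy is the same as the paper's: replay the derivation for $G$, keep the conclusions about retained cells, and rely on two facts. First, no $\sim$-class mixes retained and deleted cells. Second, a companion diagonal meeting a deleted cell is certified zero for free in $G'$. You handle the subcase $\delta(p,q)=0$ correctly, but the step you call ``exactly the delicate point'' goes wrong when $\delta(p,q)=1$.

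\begin{itemize}
\item There is no agreement condition to check. The rectangle rule only asks that the companion be certified at its prescribed value \emph{in the current configuration}. It then certifies the target at $\delta(p,q)$, which is unchanged for retained $p,q$.
\item Your claim that a deleted selected companion ``cannot occur with $\delta(p,q)=1$'' is false. That situation is precisely a complementary pair of $G$ with one diagonal deleted, whether the original step was a rectangle transfer or the complementary-pair rule.
\item Your rescue via the complementary-pair rule inside $G'$ fails, because the deleted diagonal is no longer a selected two-edge there.
\end{itemize}
The correct replay is the rectangle rule in $G'$ with the two-hole companion, which yields $p\sim' q$. The same replacement handles a complementary-pair step of $G$ with exactly one diagonal deleted. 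Your remark that these rules ``have the same flavour'' does not actually cover that case.

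There are also three smaller corrections.
\begin{itemize}
\item The class invariant does not need $\rw$. Every rule that adds $\sim$ joins the two halves of a single selected two-edge. So in any configuration, $G'$ included, each class lies inside one support, and this gives distinctness of retained classes in $G'$ directly.
\item Your statement that the closure of $G'$ is contained in the restriction of the closure of $G$ is false for $\perp_R$. New holes can certify orthogonalities that $G$ did not. You only need the $\sim$ part, which is true.
\item Rule applications through an empty row do not ``see only holes'': a rectangle through it also uses an occupied row. The correct reason is that each diagonal of such a rectangle contains a cell of the empty row. Hence it has no target pair of occupied cells and contributes no conclusion.
\end{itemize}
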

\begin{proof}
Retained occupied cells have the same selected-edge membership and the same pair values $\delta$ as before. Reproduce the original derivation in order, retaining the conclusions concerning retained cells. Every identification joins the two halves of one selected two-edge, so no retained equivalence class needs a deleted cell as an intermediate representative. A line or complementary-pair step on retained cells remains valid. In a rectangle step, either the companion cells are both retained, in which case their previously certified value is reproduced, or one is deleted, in which case the new companion is automatically certified zero. The target is still certified to its prescribed value by the rectangle rule. Saturation is therefore preserved as well. The final identifications and orthogonality obligations for retained edges all survive. The word ``orthogonal'' consequently concerns coefficient vectors, not the angle between drawn graph edges. 

If a row is empty, a rectangle containing that row cannot have both target corners occupied: each diagonal meets the empty row. Such a rectangle supplies no necessary conclusion about a pair of retained occupied cells. The empty row may thus be removed and the others relabelled. The column statement is identical.
\end{proof}

\section{Finite computations and exact verification}\label{sec:finite}

Computational search was used to find candidate configurations and extension patterns. Every reported lower bound is checked by the exact closure rules of Section~\ref{sec:def}. One program records a finite derivation, and an independent replay checker verifies every rule application and every final obligation. All data are discrete, so no floating-point tolerance is used.

A positive certificate proves that one explicit configuration is admissible, and hence gives a lower bound. It gives an exact value only when a matching upper bound is also known. For $m=13$ and $15\le m\le20$, the elementary cell bound in Section~\ref{sec:general} supplies this upper bound. For $8\le m\le12$, we instead use exhaustive finite exclusions. 

For $m\ge6$, Lemma~\ref{lem:skeleton} reduces all one-edge extremal graphs to six fixed pair-rows and $m-6$ singleton rows. Up to row and column permutations, their types are parametrized by
\begin{equation}\label{eq:partitions}
0\le a_1\le a_2\le a_3\le a_4,
\qquad a_1+a_2+a_3+a_4=m-6,
\end{equation}
where $a_j$ counts the singleton rows whose one-edge is in column $j$. For each type, the search enumerates disjoint two-edge families on the remaining cells, allowing both kinds of degenerate edges and complementary pairs. On $f$ free cells, the unpruned number of families with $k$ pairs is
\begin{equation}\label{eq:matchings}
M(f,k)=\frac{f!}{(f-2k)!\,2^k k!}.
\end{equation}
This counts labelled pair families before quotienting by further automorphisms; it does not count isomorphism classes.

The search branches on an unassigned free cell. The cell is either left as a hole or paired with a later free cell. Lemma~\ref{lem:delete} gives a hereditary pruning rule. Treat the unassigned cells as holes. If the chosen partial family fails the full closure test, then no admissible completion can contain it; otherwise deleting the future edges from such a completion would contradict the lemma. The archived implementation also uses pair and triple compatibility, necessary matching conditions, and permutations of identical untouched singleton rows. 

Table~\ref{tab:finite} gives eighteen exact values of $\zr(m,4)$ for $2\le m\le20$. The remaining value satisfies
\begin{equation}\label{eq:m14}
37\le\zr(14,4)\le38.
\end{equation}
The quantities $|E_2|$ and $H$ in the table describe the supplied witness; in the row $m=14$, that witness is only a lower-bound construction.

\begin{table}[ht]
\centering
\small
\setlength{\tabcolsep}{7pt}
\renewcommand{\arraystretch}{1.13}
\begin{tabular}{rcrrrrrl}
\hline
$m$&$\zr(m,4)$&$|E_1|$&$|E_2|$&$H$&$U_m$&$\binom{N(G)}2$&Basis\\\hline
2&5&5&0&3&6&10&P\\
3&8&7&1&3&9&28&P\\
4&10&9&1&5&12&45&P\\
5&13&10&3&4&15&78&P\\
6&16&12&4&4&18&120&P\\
7&19&13&6&3&20&171&P\\
8&21&14&7&4&23&210&E\\
9&24&15&9&3&25&276&E\\
10&27&16&11&2&28&351&E\\
11&29&17&12&3&30&406&E\\
12&32&18&14&2&33&496&E\\
13&35&19&16&1&35&595&C\\
14&$[37,38]$&20&17&2&38&666&Lower bound\\
15&40&21&19&1&40&780&C\\
16&43&22&21&0&43&903&C\\
17&45&23&22&1&45&990&C\\
18&48&24&24&0&48&1128&C\\
19&50&25&25&1&50&1225&C\\
20&53&26&27&0&53&1378&C\\\hline
\end{tabular}
\caption{Certified data. Here $U_m=\lfloor(4m+z(m,4))/2\rfloor$ is the universal cell bound. P denotes a value inherited from \cite{LQrevision26}, possibly by transposition; E denotes complete finite exclusion above the supplied witness; C denotes attainment of the cell bound. Every listed witness has all $\binom{N(G)}2$ distinct-edge orthogonality obligations certified.}\label{tab:finite}
\end{table}

The exact two- and three-column results of \cite{LQrevision26} give $\zr(2,4)=5$ and $\zr(3,4)=8$. The same source gives the entries for $m=4,5,6,7$. 

For $8\le m\le12$, Table~\ref{tab:exclusion} specifies the complete exclusion tasks. Each listed one-edge type was exhausted and no admissible configuration at the stated target size was found. Configurations of larger size are excluded as well: deleting complete two-edges would produce a configuration at the forbidden target size with the same one-edge set. Together with the witnesses, this proves the five exact values. These upper-bound computations are the archived exhaustive runs.

For $m=13$ and $15\le m\le20$, the witness size equals $U_m$, establishing equality by the universal upper bound. At $m=14$, the verified witness has $20+17=37$ edges, whereas $U_{14}=38$. The evidence establishes only the interval \eqref{eq:m14}.

\begin{table}[ht]
\centering
\begin{tabular}{rrrrrl}
\hline
$m$&Target $N$&Target $|E_2|$&Target $H$&Skeleton types&Result\\\hline
8&22&8&2&2&All excluded\\
9&25&10&1&3&All excluded\\
10&28&12&0&5&All excluded\\
11&30&13&1&6&All excluded\\
12&33&15&0&9&All excluded\\\hline
\end{tabular}
\caption{Complete upper-bound exclusions supporting the exact values for $8\le m\le12$ in Table~\ref{tab:finite}. Types are the partitions in \eqref{eq:partitions}. The number of types is not the number of pair families or backtracking states. }\label{tab:exclusion}
\end{table}

The four configurations in Figure~\ref{fig:finite-bases} illustrate the finite discovery stage. Their totals are $40,43,45,48$ and their hole counts are $1,0,1,0$. The proof in Section~\ref{sec:general} will use only the sixteen-row witness, after a column permutation; all larger even orders and all odd orders in the theorem follow from that single seed. Figure~\ref{fig:seed} displays the normalized seed and its regular twenty-row extension. The odd witnesses obtained by deletion need not be isomorphic to the historical witnesses shown here.

\begin{figure}[ht]
\centering
\begin{minipage}[t]{.24\textwidth}\centering
\textbf{$m=15$}\\[5pt]
{\footnotesize\setlength{\arraycolsep}{2.4pt}\renewcommand{\arraystretch}{1.12}$\begin{array}{r|rrrr}
 &1&2&3&4\\\hline
1&\bullet&\bullet&1&2\\
2&\bullet&3&\bullet&4\\
3&\bullet&5&5&\bullet\\
4&6&\bullet&\bullet&7\\
5&8&\bullet&2&\bullet\\
6&3&9&\bullet&\bullet\\\hline
7&7&10&8&\bullet\\
8&\circ&11&\bullet&12\\
9&13&\bullet&14&15\\
10&16&10&\bullet&16\\
11&11&6&\bullet&9\\
12&1&\bullet&13&12\\
13&4&\bullet&15&14\\
14&17&17&\bullet&18\\
15&18&19&\bullet&19\\
\end{array}$}\\[5pt]
$N=40,\ H=1$.
\end{minipage}\hfill
\begin{minipage}[t]{.24\textwidth}\centering
\textbf{$m=16$}\\[5pt]
{\footnotesize\setlength{\arraycolsep}{2.4pt}\renewcommand{\arraystretch}{1.12}$\begin{array}{r|rrrr}
 &1&2&3&4\\\hline
1&\bullet&\bullet&1&1\\
2&\bullet&2&\bullet&3\\
3&\bullet&4&5&\bullet\\
4&6&\bullet&\bullet&7\\
5&8&\bullet&9&\bullet\\
6&10&6&\bullet&\bullet\\\hline
7&11&12&\bullet&13\\
8&13&7&\bullet&11\\
9&14&15&16&\bullet\\
10&16&17&14&\bullet\\
11&15&18&10&\bullet\\
12&19&19&17&\bullet\\
13&12&3&8&\bullet\\
14&20&21&21&\bullet\\
15&18&5&20&\bullet\\
16&2&9&4&\bullet\\
\end{array}$}\\[5pt]
$N=43,\ H=0$.
\end{minipage}\hfill
\begin{minipage}[t]{.24\textwidth}\centering
\textbf{$m=17$}\\[5pt]
{\footnotesize\setlength{\arraycolsep}{2.4pt}\renewcommand{\arraystretch}{1.12}$\begin{array}{r|rrrr}
 &1&2&3&4\\\hline
1&\bullet&\bullet&1&2\\
2&\bullet&3&\bullet&4\\
3&\bullet&5&2&\bullet\\
4&4&\bullet&\bullet&6\\
5&7&\bullet&8&\bullet\\
6&9&10&\bullet&\bullet\\\hline
7&10&9&11&\bullet\\
8&12&7&\bullet&3\\
9&13&14&15&\bullet\\
10&15&8&16&\bullet\\
11&\bullet&11&13&5\\
12&\circ&\bullet&17&18\\
13&19&20&\bullet&21\\
14&21&12&\bullet&19\\
15&22&\bullet&18&17\\
16&16&\bullet&22&1\\
17&14&6&\bullet&20\\
\end{array}$}\\[5pt]
$N=45,\ H=1$.
\end{minipage}\hfill
\begin{minipage}[t]{.24\textwidth}\centering
\textbf{$m=18$}\\[5pt]
{\footnotesize\setlength{\arraycolsep}{2.4pt}\renewcommand{\arraystretch}{1.12}$\begin{array}{r|rrrr}
 &1&2&3&4\\\hline
1&\bullet&\bullet&1&2\\
2&\bullet&3&\bullet&4\\
3&\bullet&5&2&\bullet\\
4&4&\bullet&\bullet&6\\
5&7&\bullet&8&\bullet\\
6&9&10&\bullet&\bullet\\\hline
7&10&9&11&\bullet\\
8&12&\bullet&12&13\\
9&14&7&\bullet&3\\
10&15&16&17&\bullet\\
11&17&8&18&\bullet\\
12&\bullet&11&15&5\\
13&13&\bullet&19&20\\
14&21&22&\bullet&23\\
15&23&14&\bullet&21\\
16&24&\bullet&20&19\\
17&18&\bullet&24&1\\
18&16&6&\bullet&22\\
\end{array}$}\\[5pt]
$N=48,\ H=0$.
\end{minipage}
\caption{Historical witnesses from the finite discovery stage. They all meet the cell bound. Only the sixteen-row witness is needed as a seed in the uniform proof; its full elementary verification is supplied in the appendix.}\label{fig:finite-bases}
\end{figure}

\section{An eventual formula from a sixteen-row double chain}\label{sec:general}

This section proves the main construction theorem. The proof has two distinct tasks. First, we resolve every selected two-edge, meaning that its two occupied cells are certified to carry the same label. Second, we certify orthogonality between every two distinct labels. The construction is organized as follows: a fixed sixteen-row core is extended by two parallel chains; the chain labels are resolved in a prescribed order; path transport proves orthogonality inside the new part; and a column-wise argument proves orthogonality between the new part and the old core. The case $t=0$ is a separate finite base case, verified in Appendix~\ref{app:seed}.

\begin{center}
\begin{minipage}{0.92\linewidth}
\textbf{Proof roadmap.} The ordinary skeleton gives the upper bound. The construction then proceeds through five checkpoints:
\begin{enumerate}[leftmargin=2em]
\item define the fixed core and the two-chain family $K_t$;
\item resolve the retained core pairs and then the new chain pairs;
\item prove orthogonality inside the two chains by grounded path transport;
\item prove the old--new interface, first in column four and then in all columns;
\item replay the seed certificate and obtain the even and odd orders.
\end{enumerate}
A reduction arrow $P\leadsto Q$ is not an equality: it means that one genuine rectangle rule transfers the obligation $P$ to the obligation $Q$. Every reduction chain is read backwards from a terminal relation already grounded by a common row, a common column, or an earlier lemma.
\end{minipage}
\end{center}

\subsection{The ordinary skeleton and the upper bound}
The first step is independent of the recursive certificate. It identifies the ordinary extremal skeleton and converts the cell count into the upper bound used later.
\begin{lemma}[The four-column ordinary skeleton, see {\cite{Culik56}}]\label{lem:skeleton}
For $m\ge6$, $z(m,4)=m+6$. Every extremal one-edge graph consists, up to row and column permutations, of the six degree-two rows with column pairs $12,13,14,23,24,34$, and $m-6$ degree-one rows.
\end{lemma}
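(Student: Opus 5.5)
We need to prove $z(m,4)=m+6$ for $m\ge6$, together with the structure of the extremal graphs. The approach is to view a $C_4$-free bipartite graph between $[m]$ and $[4]$ as a family of $m$ subsets $R_1,\dots,R_m\subseteq[4]$, one per row. The $C_4$-free condition says exactly that no two rows share two columns, i.e. $|R_i\cap R_k|\le1$ for $i\ne k$.

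\textbf{Pair count (upper bound).} Each row of degree $d_i$ covers $\binom{d_i}{2}$ column pairs, and these pairs must be distinct across rows, so
\[
\sum_i\binom{d_i}{2}\le\binom42=6.
\]
Since $\binom{d}{2}\ge d-1$ for every $d\ge0$, with equality only for $d\in\{1,2\}$, we get
\[
\sum_i d_i\le \sum_i\Bigl(\binom{d_i}{2}+1\Bigr)\le 6+m.
\]
This is the upper bound $z(m,4)\le m+6$.

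\textbf{Construction (lower bound).} Take the six rows $\{1,2\},\{1,3\},\{1,4\},\{2,3\},\{2,4\},\{3,4\}$ and add $m-6$ singleton rows. No two of the six pair-rows share two columns, and a singleton cannot share two columns with anything, so the graph is $C_4$-free. It has $12+(m-6)=m+6$ edges, which requires $m\ge6$. Hence $z(m,4)=m+6$.

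\textbf{Structure.} Equality $\sum_i d_i=m+6$ forces equality in both inequalities above:
\begin{itemize}
\item every row has $d_i\in\{1,2\}$, so there are no empty rows and no rows of degree $3$ or $4$;
\item $\sum_i\binom{d_i}{2}=6$, so exactly six rows have degree two.
\end{itemize}
Their pairs are distinct, hence they are all six pairs $12,13,14,23,24,34$, and the remaining $m-6$ rows have degree one. The six pair-rows are determined as sets, so the statement holds up to row permutations alone. Column permutations are not even needed, though they are harmless.

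\textbf{Main obstacle.} The only step needing care is the inequality $\binom{d}{2}\ge d-1$ and its equality cases:
\begin{itemize}
\item $d=0$: $0\ge-1$, strict;
\item $d=1$: $0=0$, equality;
\item $d=2$: $1=1$, equality;
\item $d=3$: $3>2$, strict;
\item $d=4$: $6>3$, strict.
\end{itemize}
So equality holds exactly for $d\in\{1,2\}$. In particular, for $m\ge6$ a row of degree $3$ or $4$ cannot occur in an extremal graph, which is what the structural claim needs.
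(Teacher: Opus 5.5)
Your proof is correct and complete. The paper does not prove this lemma; it cites \v{C}ul\'ik's classical theorem, so your argument is a self-contained replacement rather than a reconstruction of a proof in the text.

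Your two ingredients are the pair count $\sum_i\binom{d_i}{2}\le\binom42$ and the identity $\binom d2-(d-1)=\frac{(d-1)(d-2)}{2}\ge0$. Together they form the standard counting argument behind \v{C}ul\'ik's result. Your equality analysis gives exactly what the paper later relies on:
\begin{itemize}
\item no empty rows and no rows of degree three or four;
\item each of the six pairs occurring exactly once;
\item degree-one rows whose columns are unconstrained, which is what the finite-search parametrization $a_1+\dots+a_4=m-6$ assumes.
\end{itemize}

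The citation gives the paper the general-width statement at no cost, but your argument yields it too. Replacing $6$ by $\binom n2$ gives $z(m,n)=m+\binom n2$ for $m\ge\binom n2$ with the same extremal structure, which is the form used in the concluding section for $n\ge5$.

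Your remark that column permutations are unnecessary is also right, since the six degree-two rows are forced as sets.
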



\begin{corollary}[Cell bound, see {\cite{LQrevision26}[Corollary 3.4]}]\label{cor:cell}
For $m\ge6$, every configuration admitted in Definition~\ref{def:zrl} satisfies
\begin{equation}\label{eq:cell-new}
N(G)=\frac{5m+6-H}{2},\qquad H\equiv m\pmod2.
\end{equation}
Consequently,
\begin{equation}\label{eq:upper-new}
\zr(m,4)\le z_2(m,4)\le\floor{\frac{5m+6}{2}}.
\end{equation}
\end{corollary}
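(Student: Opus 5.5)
The plan is a plain cell count built on the skeleton of Lemma~\ref{lem:skeleton}. Let $G$ be admitted in Definition~\ref{def:zrl} with $m\ge6$. The limited condition gives $|E_1|=z(m,4)=m+6$, so $E_1$ occupies exactly $m+6$ cells. By simplicity (S), the supports of distinct selected edges are disjoint. Hence every two-edge occupies exactly two cells that are not one-edges, and no cell is counted twice. The occupied set therefore satisfies
\[
|\Omega|=|E_1|+2|E_2|=(m+6)+2|E_2|,
\qquad H=4m-|\Omega|=3m-6-2|E_2| .
\]
Solving for $|E_2|=(3m-6-H)/2$ and adding $|E_1|$ gives
\[
N(G)=m+6+\frac{3m-6-H}{2}=\frac{5m+6-H}{2}.
\]
Since $|E_2|$ is an integer, $3m-6-H$ is even, which means $H\equiv m\pmod 2$. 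This proves \eqref{eq:cell-new}.

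For the upper bound, note that $H\ge0$ and $H\equiv m \pmod 2$. So $N(G)\le(5m+6)/2$ when $m$ is even, and $N(G)\le(5m+5)/2$ when $m$ is odd. In both cases $N(G)\le\floor{(5m+6)/2}$. Taking the maximum over admitted configurations bounds $\zr(m,4)$.

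The bound for $z_2$ comes from the same count. The admissible class for $z_2(m,4)$ also consists of simple, limited configurations with $C_4$-free one-edge graph, and admissibility there only replaces $\rw$ by irreducibility. The identity $N(G)=(5m+6-H)/2$ used only (S) and $|E_1|=m+6$, so it holds in that class as well, giving $z_2(m,4)\le\floor{(5m+6)/2}$. The inequality $\zr\le z_2$ is part of the hierarchy \eqref{eq:hierarchy}, or follows directly from Proposition~\ref{prop:sound}. Together these give \eqref{eq:upper-new}.

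There is no real obstacle here. The only points needing care are that limitedness is an equality, so $|E_1|$ is exactly $m+6$ rather than at most $m+6$, and that (S) forbids a two-edge from reusing a one-edge cell. The $C_4$-freeness and the recursive closure play no role in this upper bound.
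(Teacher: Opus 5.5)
Your proof is correct and follows the route the paper indicates. It combines $|E_1|=z(m,4)=m+6$ from Lemma~\ref{lem:skeleton} with the disjoint-support count $|\Omega|=|E_1|+2|E_2|$, which is exactly the universal cell bound $U_m=\lfloor(4m+z(m,4))/2\rfloor$ of Table~\ref{tab:finite}, and gets $\zr\le z_2$ from Proposition~\ref{prop:sound}.
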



\subsection{A fixed core, two parallel chains, and a terminal row}
The construction has three visible parts: the old fifteen-row core, two parallel paths of two-edges, and a terminal row. The symbols $A_i$ and $S_i$ denote the two paths, while $R_i$ joins the two rows in the $i$th new block.
We now give one construction for every even $m\ge16$. The odd orders will follow by deleting one fixed row. 

Let $K_0$ be the sixteen-row seed in Figure~\ref{fig:seed}. The integer $j$ denotes the two-edge $B_j$, and every $\bullet$ is a different one-edge. This is the earlier sixteen-row witness after the column permutation $(2,3,1,4)$. The complete elementary verification of the seed is given in Appendix~\ref{app:seed}. In particular, its admissibility does not require running an external checker.

\begin{figure}[ht]\centering
\begin{minipage}[t]{.46\linewidth}\centering\small
\textbf{The normalized seed $K_0$}\par\medskip
\renewcommand{\arraystretch}{1.12}
$\begin{array}{c|cccc}&1&2&3&4\\\hline
1&\bullet&1&\bullet&1\\
2&\boxed{2}&\bullet&\bullet&3\\
3&\boxed{4}&5&\bullet&\bullet\\
4&\bullet&\bullet&6&7\\
5&\bullet&9&8&\bullet\\
6&6&\bullet&10&\bullet\\
\hline
7&12&\bullet&11&13\\
8&7&\bullet&13&11\\
9&15&16&14&\bullet\\
10&17&14&16&\bullet\\
11&18&10&15&\bullet\\
12&19&17&19&\bullet\\
13&3&8&12&\bullet\\
14&21&21&20&\bullet\\
15&5&20&18&\bullet\\
\hline
16&9&\boxed{4}&\boxed{2}&\bullet\\\end{array}$
\par\medskip $m=16,\quad N=43,\quad H=0.$
\end{minipage}\hfill
\begin{minipage}[t]{.49\linewidth}\centering\small
\textbf{The regular twenty-row member $K_2$}\par\medskip
\renewcommand{\arraystretch}{1.12}
$\begin{array}{c|cccc}&1&2&3&4\\\hline
1&\bullet&1&\bullet&1\\
2&A_{0}&\bullet&\bullet&3\\
3&S_{0}&5&\bullet&\bullet\\
4&\bullet&\bullet&6&7\\
5&\bullet&9&8&\bullet\\
6&6&\bullet&10&\bullet\\
\hline
7&12&\bullet&11&13\\
8&7&\bullet&13&11\\
9&15&16&14&\bullet\\
10&17&14&16&\bullet\\
11&18&10&15&\bullet\\
12&19&17&19&\bullet\\
13&3&8&12&\bullet\\
14&21&21&20&\bullet\\
15&5&20&18&\bullet\\
\hline
16&A_{1}&\bullet&A_{0}&R_{1}\\
17&S_{1}&S_{0}&\bullet&R_{1}\\
\hline
18&A_{2}&\bullet&A_{1}&R_{2}\\
19&S_{2}&S_{1}&\bullet&R_{2}\\
\hline
20&9&S_{2}&A_{2}&\bullet\\\end{array}$
\par\medskip $m=20,\quad N=53,\quad H=0.$
\end{minipage}
\caption{One seed and two parallel paths. Integers denote $B_j$ and black dots are distinct one-edges. The boxed pairs $B_2,B_4$ in $K_0$ are lengthened into the $A$- and $S$-paths. The first fifteen rows remain fixed apart from their two port labels. The twenty-row member consists of that fixed core, two identically patterned two-row blocks, and a terminal row; it is not the irregular twenty-row seed of the earlier construction.}\label{fig:seed}
\end{figure}

The first six rows constitute the classical degree-two core; the fixed fifteen-row part also includes nine degree-one rows. The two edges to be lengthened are
\begin{equation}\label{eq:two-ports}
B_2=\{(2,1),(16,3)\},\qquad
B_4=\{(3,1),(16,2)\}.
\end{equation}
For $t\ge1$, retain the first fifteen rows, replacing $(2,1)$ by $A_0$ and $(3,1)$ by $S_0$. Retain every $B_j$ except $B_2,B_4$. Set
\[
m=16+2t,\qquad u_i=14+2i,\quad v_i=15+2i\quad(1\le i\le t),\qquad w=16+2t.
\]
Insert the following rows between the fixed core and the last row:
\begin{equation}\label{eq:double-chain}
\begin{array}{c|cccc}
 &1&2&3&4\\\hline
u_i&A_i&U_i&A_{i-1}&R_i\\
v_i&S_i&S_{i-1}&V_i&R_i\\\hline
w&B_9&S_t&A_t&W
\end{array}
\end{equation}
Here $U_i,V_i,W$ are distinct one-edges, not paired labels. Each $R_i$ is a column-degenerate two-edge. The $A$-edges form a path using columns one and three; the $S$-edges form a second path using columns one and two. In coordinates, put
\[
a_0=2,\quad a_i=u_i\ (1\le i\le t),\quad a_{t+1}=w,
\qquad
s_0=3,\quad s_i=v_i\ (1\le i\le t),\quad s_{t+1}=w.
\]
Then the replacement pairs are exactly
\begin{equation}\label{eq:double-pairs}
\begin{aligned}
A_i&=\{(a_i,1),(a_{i+1},3)\},\quad
S_i=\{(s_i,1),(s_{i+1},2)\} &&(0\le i\le t),\\
R_i&=\{(u_i,4),(v_i,4)\} &&(1\le i\le t).
\end{aligned}
\end{equation}
Thus two old pairs are replaced by $3t+2$ pairs. No old occupied cell becomes a hole. Denote the resulting configuration by $K_t$. Its counts are
\begin{equation}\label{eq:new-counts}
|E_1(K_t)|=22+2t=m+6,\quad
|E_2(K_t)|=21+3t,\quad
N(K_t)=43+5t=\frac{5m+6}{2},\quad H(K_t)=0.
\end{equation}
The new one-edges are $(u_i,2),(v_i,3)$, so they introduce no new one-edge quadrilateral. Simplicity follows immediately from the displayed supports. In the displayed column order, the degree-one rows are distributed as $(0,t+2,t,8)$; this count is not sorted into a canonical skeleton type.

The operation from $K_t$ to $K_{t+1}$ lengthens both terminal pairs before the last row. It adds two rows, two one-edges, and three net two-edges. It is not a claim that an arbitrary two-row block can be appended to an arbitrary admissible graph.

\subsection{Resolving all pairs without a computer-assisted initialization}
The next two lemmas concern resolution only. They do not yet prove all orthogonality obligations. The first lemma initializes the fixed core; the second resolves the double chain in an order forced by its dependency structure.
For the rest of the proof with $t\ge1$, an \emph{old row} means one of the first fifteen rows or the last row. We give the last row the old name $16$ when referring to the seed pattern, but its physical row number is $w$. Let $p_{rc}$ denote the occupied cell in old row $r$, column $c$. At a black dot write $x_{rc}$ for its one-edge label. This convention does not identify any pair prematurely.

A reduction arrow below means that a genuine rectangle transfers the obligation on the left to that on the right. It is read backwards, starting with its grounded terminal pair. Saturation may use a second representative only after the corresponding edge has been resolved. Thus the tables below are finite mathematical derivations, not compressed computer traces.

\begin{lemma}[Elementary core initialization]\label{lem:init-new}
In every $K_t$, $t\ge1$, the sixteen retained two-edges
\[
B_j,\qquad j\in\{1,5,6,7,10,11,12,13,14,15,16,17,18,19,20,21\},
\]
resolve using only the first fifteen rows. No identification of $A_0,A_t,S_0,S_t$, or of $B_3,B_8,B_9$, is used.
\end{lemma}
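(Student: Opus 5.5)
The plan is to exhibit an explicit, ordered finite derivation in the closure of Section~\ref{sec:def}. It uses only the line rule, the complementary-pair rule, saturation through edges already resolved, and rectangle transfers whose four corners lie in rows $1,\dots,15$. Rows $1$–$15$ of $K_t$ are literally the same for every $t\ge1$; only the two port cells $(2,1),(3,1)$ carry the names $A_0,S_0$ instead of $B_2,B_4$. A derivation confined to these rows is therefore uniform in $t$. I will never use $p\sim q$ for a pair containing a port cell or a cell of $B_3,B_8,B_9$, i.e.\ the cells $(2,4),(13,1),(5,2),(13,2)$ and the old-row-$16$ cell $(w,1)$. Individual cells of these edges may appear inside orthogonality relations, because $p\perp_R q$ for a single pair of cells does not presuppose any identification.

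\textbf{Step 1 (grounded resolutions).} Three edges are row-degenerate and resolve by the line rule: $B_1$ in row $1$, $B_{19}$ in row $12$, and $B_{21}$ in row $14$. Two complementary pairs resolve by the complementary-pair rule. Rows $7,8$ with columns $3,4$ carry $B_{11}=\{(7,3),(8,4)\}$ and $B_{13}=\{(7,4),(8,3)\}$. Rows $9,10$ with columns $2,3$ carry $B_{14}=\{(9,3),(10,2)\}$ and $B_{16}=\{(9,2),(10,3)\}$. This gives seven resolved edges without any transfer.

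\textbf{Step 2 (rectangle transfers in dependency order).} The remaining nine edges are
\[
B_5,\ B_6,\ B_7,\ B_{10},\ B_{12},\ B_{15},\ B_{17},\ B_{18},\ B_{20}.
\]
Each is nondegenerate. I would resolve each one through the rectangle it spans, whose companion diagonal must first be certified orthogonal. For instance, $B_{20}=\{(14,3),(15,2)\}$ has companion $\{(14,2),(15,3)\}$, and $B_{12}=\{(7,1),(13,3)\}$ has companion $\{(7,3),(13,1)\}$. Each such companion orthogonality is produced by a backward chain of reductions $P\leadsto Q$, which must terminate at one of two grounded facts:
\begin{itemize}
  \item two cells on a common line with $\delta=0$;
  \item a saturation step moving one cell to the other half of an edge already resolved.
\end{itemize}
For example, $(14,2)\sim(14,1)$ by $B_{21}$, and $(7,3)\sim(8,4)$ by $B_{11}$.

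Each resolution enlarges the set of usable representatives, so I would order the nine transfers by dependency. Edges whose companions touch the already-resolved $B_{11},B_{13},B_{14},B_{16},B_{19},B_{21}$ come first, namely $B_{12},B_{15},B_{17},B_{20}$. Edges reached through them come next, namely $B_{18},B_{10},B_5$. The edges touching the degree-two rows, $B_6$ and $B_7$, come last. I would record the derivation as a table with one line per edge, giving the target, the companion, and the grounded reduction chain.

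\textbf{Main obstacle.} Step~2 is the main obstacle in two respects. The first is groundedness: a companion such as $(7,3)\perp(13,1)$ cannot be obtained from its own rectangle, so the chain must reach a common line through a different rectangle, never cycling back to an unproved obligation. The second is confinement: any chain that would pass through $(13,1)\sim(2,4)$ (that is, through $B_3$) or through $B_8,B_9$ is forbidden. For those cells I would only use them as endpoints of line-rule orthogonalities within rows $13$ and $5$ and columns $1,2,4$, where $\delta=0$ holds without any identification. If a direct chain fails for some edge, I would first try routing through the complementary $2\times2$ blocks of rows $7,8$ and $9,10$. Those blocks supply the most saturation representatives. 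Finally, I would cross-check the resulting table against the seed certificate in Appendix~\ref{app:seed} restricted to rows $1$–$15$, confirming that every step used there lies inside those rows.
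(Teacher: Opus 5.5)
\textbf{Verdict: genuine gap.} Your Step~1 is exactly the paper's. Your mechanism for Step~2 is also the paper's: resolve each of the nine remaining nondegenerate edges through its own rectangle, after certifying its companion by a chain read backwards from a common line. The problem is that the lemma has no content beyond those nine chains and a consistent order for them, and you supply neither. The chains are deferred (``I would record the derivation as a table''), and the order you do commit to cannot be realized.

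\emph{$B_{12}$ cannot come first.} Its companion is $\{(7,3),(13,1)\}$. Since $B_3$ may not be identified, $(13,1)$ has no second representative. The rectangle on rows $7,13$, columns $1,3$ is $B_{12}$'s own, so the only move is $(7,3)\sim(8,4)$ followed by the rectangle on rows $8,13$, columns $1,4$, which yields $\{(8,1),(13,4)\}$. That pair can be grounded only by moving $(8,1)$ to $(4,4)$ in column four, i.e.\ after $B_7$ has resolved.

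\emph{$B_{20}$ cannot precede $B_{18}$.} Its companion $\{(14,2),(15,3)\}$ is grounded by moving $(15,3)$ to $(11,1)$, opposite $(14,1)$. The only other route (rows $14,15$, columns $1,3$, then $B_5$ and rows $3,14$) ends at the port cell $(3,1)$ opposite the unresolved $(14,3)$ and closes into a cycle.

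So ``the companion touches an already resolved edge'' is not a sufficient criterion. Both ends must be steerable to a common line without using a port identification or the identification of $B_3,B_8,B_9$. (Your protected list also omits $(5,3)$, the other half of $B_8$.)

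\textbf{A valid order} is $B_{17},B_{15},B_{18},B_{20},B_5,B_{10},B_6,B_7,B_{12}$. Write each companion after moving along already resolved edges, let each arrow be one rectangle, and let the last pair share a line. The chains are then:
$B_{17}$: $\{(9,3),(12,3)\}$;
$B_{15}$: $\{(10,2),(11,1)\}\leadsto\{(12,2),(11,2)\}$;
$B_{18}$: $\{(9,1),(15,1)\}$;
$B_{20}$: $\{(14,1),(11,1)\}$;
$B_5$: $\{(3,1),(14,3)\}\leadsto\{(3,3),(14,2)\}\leadsto\{(3,2),(15,2)\}$;
$B_{10}$: $\{(6,2),(9,1)\}\leadsto\{(6,1),(10,3)\}\leadsto\{(6,3),(12,2)\}\leadsto\{(6,2),(12,1)\}\leadsto\{(6,1),(10,1)\}$;
$B_6$: $\{(4,1),(11,2)\}\leadsto\{(4,2),(15,3)\}\leadsto\{(4,3),(14,3)\}$;
$B_7$: $\{(4,1),(7,3)\}\leadsto\{(6,1),(7,1)\}$;
$B_{12}$: $\{(8,4),(13,1)\}\leadsto\{(4,4),(13,4)\}$.
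These chains use only rows $1$--$15$ and never identify a port or $B_3,B_8,B_9$.

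Finally, your proposed cross-check against the appendix would be circular: the appendix obtains these sixteen resolutions by citing this very lemma.
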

This lemma supplies the ground layer for the later construction. It deliberately avoids the two opened port pairs and the three pairs that will be reached through the double chain.
\begin{proof}
The line rule resolves $B_1,B_{19},B_{21}$. The complementary-pair rule resolves $B_{11},B_{13}$ on rows $7,8$ and columns $3,4$, and $B_{14},B_{16}$ on rows $9,10$ and columns $2,3$.

The remaining nine identifications follow in the order of Table~\ref{tab:core-init}. For each row, first certify the companion diagonal by reading its reduction chain backwards; its last pair has already resolved representatives on a common line. Then the rectangle of $B_j$ certifies value one and identifies its two halves. For example, the companion reductions for $B_5$ use successively rows $3,14$, columns $1,3$, and rows $3,14$, columns $2,3$; the terminal $B_{20}$ has its other representative at $(15,2)$. The cell $p_{32}$ need not yet be identified with $p_{15,1}$. Thus there is no circular use of the identification being proved. All other rows have exactly the same explicitly checkable interpretation.

\begin{table}[ht]
\centering\small
\setlength{\tabcolsep}{5pt}\renewcommand{\arraystretch}{1.3}
\begin{tabular}{c l}\hline
Resolve&Companion reductions; the final pair is grounded by a line\\\hline
$B_{17}$&$(B_{14},B_{19})$\\[3pt]
$B_{15}$&$(B_{14},p_{11,1})\leadsto (B_{17},p_{11,2})$\\[3pt]
$B_{18}$&$(B_{15},p_{15,1})$\\[3pt]
$B_{20}$&$(B_{18},B_{21})$\\[3pt]
$B_{5}$&$(B_{20},p_{3,1})\leadsto (B_{21},x_{3,3})\leadsto (B_{20},p_{3,2})$\\[3pt]
$B_{10}$&$(B_{15},x_{6,2})\leadsto (B_{16},p_{6,1})\leadsto (B_{17},p_{6,3})\leadsto (B_{19},x_{6,2})\leadsto (B_{17},p_{6,1})$\\[3pt]
$B_{6}$&$(B_{10},x_{4,1})\leadsto (B_{18},x_{4,2})\leadsto (B_{20},p_{4,3})$\\[3pt]
$B_{7}$&$(B_{11},x_{4,1})\leadsto (B_{6},p_{7,1})$\\[3pt]
$B_{12}$&$(B_{11},p_{13,1})\leadsto (B_{7},x_{13,4})$\\[3pt]
\hline\end{tabular}
\caption{A nine-row mathematical initialization for the fixed core. Raw cell symbols $p_{rc}$ are not identified in advance. A single entry in the second column means that the companion is already grounded.}\label{tab:core-init}
\end{table}

Each use of $p_{rc}$ refers to the indicated cell, even if its two-edge has not yet resolved. Every $B_j$ used as a movable label has resolved on an earlier line. The table therefore gives a finite mathematical derivation, rather than a reference to a stored program trace.
\end{proof}

\begin{lemma}[Uniform resolution of the double chain]\label{lem:resolution-new}
Every two-edge of $K_t$ resolves for every $t\ge1$.
\end{lemma}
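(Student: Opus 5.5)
The resolution splits into three layers, handled in increasing order of difficulty: the fixed core, the column-degenerate edges $R_i$, and the two paths together with the three "reached" core pairs $B_3,B_8,B_9$. First, Lemma~\ref{lem:init-new} resolves the sixteen core edges $B_j$, $j\in\{1,5,6,7,10,\dots,21\}$, using only the first fifteen rows. Second, each $R_i=\{(u_i,4),(v_i,4)\}$ lies in column four, so the line rule immediately gives $(u_i,4)\sim(v_i,4)$. After these two steps, every cell of the new blocks other than the $A$- and $S$-cells is either a one-edge ($U_i,V_i,W$) or half of an already resolved edge. This gives a rich supply of grounded line relations to use as terminal pairs of reduction chains.

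The main step is an induction along the chains in the order
\[
(A_0,S_0),\ (A_1,S_1),\ \dots,\ (A_t,S_t),
\]
followed by $B_9$, $B_3$ and $B_8$. For $A_i=\{(a_i,1),(a_{i+1},3)\}$, the resolving rectangle uses rows $a_i,a_{i+1}$ and columns $1,3$. Its companion diagonal is $\{(a_i,3),(a_{i+1},1)\}$, where $(a_{i+1},1)$ is the first cell of $A_{i+1}$ (or the $B_9$-cell when $i=t$) and $(a_i,3)$ is the lower cell of $A_{i-1}$ or a one-edge. This companion must be certified orthogonal, and I would do so by a short reduction chain inside the two-row block $\{u_{i+1},v_{i+1}\}$. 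Concretely, the chain passes through the column-four rectangle with the resolved $R_{i+1}$ and through the one-edges $U_{i+1}$ and $V_{i+1}$. It terminates in a pair on a common row or column, for instance via column $2$ or column $3$ against $U_{i+1}$ or $V_{i+1}$. The $S_i$ case is symmetric, with columns $1,2$ in place of $1,3$.

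Because every block has the identical pattern \eqref{eq:double-chain}, a single table of companion reductions, written with symbolic index $i$, serves for all $1\le i<t$. It would be stated in the same backwards-read format as Table~\ref{tab:core-init}, using only $A_{i-1}$, $S_{i-1}$ (resolved at the previous step), $R_i$, $R_{i+1}$ and the one-edges. Two boundary tables are then needed. The first handles $i=0$, where the ports $(2,1)$ and $(3,1)$ sit in core rows $2$ and $3$ and the chain may use the core labels resolved by Lemma~\ref{lem:init-new}. The second handles $i=t$, where the block below is replaced by the terminal row $w=(B_9,S_t,A_t,W)$. Once $A_t$ and $S_t$ are resolved, the last row behaves exactly like seed row $16$, with its cells in columns $2$ and $3$ now carrying resolved labels. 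So $B_9$, then $B_3$ and $B_8$, are resolved by replaying the corresponding seed derivation from Appendix~\ref{app:seed}, substituting $S_t$ for $B_4$ and $A_t$ for $B_2$ at the port cells.

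I expect the main obstacle to be acyclicity in the middle step. The natural companion for $A_i$ involves the $A_{i+1}$-cell, which is not yet resolved, so the reduction must use only raw cell symbols $p_{rc}$ for unresolved edges and must never invoke saturation with a second representative of $A_{i+1}$ or $S_{i+1}$. My first attempt would be to ground each chain in the one-edge of the next block together with the column-degenerate $R_{i+1}$. If the forward order turns out to be circular, I would instead resolve in reverse order, from the terminal row upward: resolve $B_9$ first from core data, then $A_t,S_t$, and proceed down to $A_0,S_0$, closing the induction at the core ports.
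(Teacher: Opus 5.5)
\textbf{Gap: the resolution order, which is the real content of this lemma.} Neither of your orders works.

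\emph{Your primary (forward) order fails at $A_0$.} Take $A_0=\{(2,1),(u_1,3)\}$, which your plan resolves first. It is not on a line, and its complementary diagonal $\{(2,3),(u_1,1)\}$ is not a selected pair. So the only route is rectangle transfer, which needs $(2,3)\perp_R(u_1,1)$, since there are no holes. But $\{(2,3),(u_1,1)\}$ is a diagonal of exactly one genuine rectangle, and its other diagonal is $A_0$ itself. Hence no reduction chain starting from that raw pair can certify it without circularity, whether it passes through $R_1$, $U_1$, $V_1$ or anything else. The only non-circular route is saturation. Because $(2,3)$ is the one-edge $x_{23}$, saturation needs $(u_1,1)$ identified with the other half of $A_1$, i.e.\ $A_1$ resolved first. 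The same argument shows every $A_i$ needs an already resolved neighbour: $A_{i-1}$, $A_{i+1}$, or $B_9$ at $(w,1)$ when $i=t$. So the $A$-path can only be started at its terminal end, through $B_9$.

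\emph{Your fallback (reverse) order fails for the mirror-image reason.} The companion of $B_9=\{(5,2),(w,1)\}$ is $\{x_{51},(w,2)\}$. By the same argument this can be certified only once $S_t$ is resolved, because then $S_t$'s representative $(v_t,1)$ shares column one with $x_{51}$. So $B_9$ cannot be obtained ``from core data''. The $S$-path can only be started at $S_0$, where $(3,2)$ carries the core pair $B_5$ with its representative $(15,1)$ in column one. The two paths are therefore \emph{not} symmetric: $(2,3)$ is a one-edge, while $(3,2)$ is a resolved core pair.

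\textbf{The order the paper uses is essentially forced.}
\begin{itemize}
\item $R_i$ resolve by the column rule.
\item $S_0,\dots,S_t$ resolve left to right. Each companion is grounded by pairing the column-one representative of $B_5$ or $S_{i-1}$ with a column-one cell.
\item $B_9$ then resolves via $S_t$ and $x_{51}$.
\item $B_8$ resolves next; its companion labels $B_9,B_{12}$ have column-one representatives $(w,1)$ and $(7,1)$.
\item $A_t$ resolves by moving $B_9$ to $(5,2)$ and using rows $u_t,5$, columns $2,3$. The companion $U_t$, $B_8$ is grounded by $B_8$ at $(13,2)$.
\item $A_{t-1},\dots,A_0$ resolve right to left through column three.
\end{itemize}
Apart from $A_t$, each step is just saturation followed by a line relation, not a chain inside the next block.

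\textbf{$B_3$ needs a new chain.} It cannot be obtained by replaying the seed derivation. That derivation uses $B_2$ at $(16,3)$ as the second representative of the cell $(2,1)$. In $K_t$ this cell carries $A_0$, whose other half is $(u_1,3)$, not the $A_t$-cell $(w,3)$. A chain such as $(A_0,x_{13,4})\leadsto(R_1,B_{12})\leadsto(A_1,B_{13})$ works instead. It presupposes that $A_0$ and $A_1$ are already resolved.
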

The left-to-right order for the $S$-path and the right-to-left order for the $A$-path are forced: each step uses a pair resolved at the preceding step. The endpoint cases are stated separately because the next block does not exist there.
\begin{proof}
All $R_i$ resolve by the column rule. Resolve $S_0,S_1,\ldots,S_t$ from left to right. For $S_0$, the companion diagonal is $B_5$ at $(3,2)$ and $(v_1,1)$; $B_5$ also has a representative in column one. For $1\le i<t$, the companion for $S_i$ contains $S_{i-1}$ and $(v_{i+1},1)$, and $S_{i-1}$ already has a column-one representative. At the endpoint the second cell is $B_9$ at $(w,1)$, with the same conclusion. Hence every $S_i$ resolves.

Next $B_9=\{(5,2),(w,1)\}$ resolves: its companion is $x_{51}$ and $S_t$ at $(w,2)$, and $S_t$ has a column-one representative. Then $B_8=\{(5,3),(13,2)\}$ resolves, since its companion labels $B_9,B_{12}$ both have column-one representatives.

Resolve $A_t$ first. Its companion is $(u_t,3)$ and $B_9$ at $(w,1)$. Instead use $B_9$ at $(5,2)$: the corresponding companion pair is $U_t$ and $B_8$ at $(5,3)$. These are orthogonal because $B_8$ also occurs at $(13,2)$. Thus $A_t$ resolves. For $i=t-1,\ldots,0$, the companion for $A_i$ contains $A_{i+1}$ and a column-three cell in the preceding path row. The resolved $A_{i+1}$ has a column-three representative, so this companion is grounded. This includes $i=0$, where the preceding path row is old row $2$.

Only $B_3=\{(2,4),(13,1)\}$ remains. Its companion labels are $A_0,x_{13,4}$. They have the reduction chain
\[
(A_0,x_{13,4})\ \leadsto\ (R_1,B_{12})\ \leadsto\ (A_1,B_{13}).
\]
The first rectangle uses $A_0$ at $(u_1,3)$ and $x_{13,4}$; the second uses $R_1$ at $(u_1,4)$ and $B_{12}$ at $(7,1)$. The terminal pair shares column three, since $B_{13}$ occurs at $(8,3)$. Therefore $B_3$ resolves. Every identification used above joins the two halves of a prescribed pair, so distinct selected edges stay distinct classes.
\end{proof}

\subsection{A path-transport lemma and orthogonality inside the chain}
We now change tasks: all relevant pairs have been resolved, and we begin proving orthogonality. The defect notation below treats ordinary orthogonality and the unit-norm identity in one formula.
From now on a resolved label also denotes its common coefficient vector. To avoid a possible confusion when a companion is a selected diagonal, set
\[
[X,Y]=\langle X,Y\rangle-\mathbf 1_{X=Y}.
\]
For distinct labels, $[X,Y]=0$ is ordinary orthogonality; for the same label it is the already known unit-norm identity. A rectangle gives equality of one defect with the negative of the companion defect. All uses of such identities below terminate at a known correct value. We do not infer a new certificate merely from an ungrounded equation of the form $f=-f$.

\begin{lemma}[Grounded path transport]\label{lem:path}
Let $r_0,\ldots,r_k$ be distinct rows and $a,b,c$ distinct columns. Suppose the pairs
\[
e_j=\{(r_j,a),(r_{j+1},b)\},\qquad 0\le j<k,
\]
are resolved, and that all side cells $y_i=(r_i,c)$ are occupied. For every $0\le i\le k$ and $0\le j<k$, the closure certifies $y_i\perp_R e_j$.
\end{lemma}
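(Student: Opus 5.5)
We prove the lemma by strong induction on the index gap $d=i-j$. Every reduction is a single rectangle transfer whose companion diagonal consists of a side cell and a cell of some $e_{j'}$. Before the induction we record three preliminary facts.

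\emph{Occupancy and prescribed values.} All cells $(r_j,a)$, $(r_{j+1},b)$ and $y_i=(r_i,c)$ are occupied. Because $a,b,c$ are distinct and the rows are distinct, every rectangle used below is genuine. A pair consisting of a side cell $y_i$ and a cell of some $e_j$ is never a selected edge: that cell already lies in the selected edge $e_j$, and (S) forbids it from lying in a second one. So every target and every companion diagonal below has prescribed value zero.

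\emph{Saturation.} Since each $e_j$ is resolved, saturation lets us freely replace one of its cells by the other in an orthogonality statement. So ``$y_i\perp_R e_j$'' means certifying $y_i\perp_R p$ for either cell $p$ of $e_j$. If $p\ne y_i$ lies on a common line with $y_i$, the line rule applies directly.

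\emph{Base cases.} If $i=j$, then $y_j$ and $(r_j,a)$ share row $r_j$, so the line rule gives $y_j\perp_R e_j$. If $i=j+1$, then $y_{j+1}$ and $(r_{j+1},b)$ share row $r_{j+1}$, with the same conclusion.

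\emph{Case $i<j$.} We use two transfers.
\begin{itemize}
\item Take rows $r_i,r_j$ and columns $c,a$. The target diagonal is $\{y_i,(r_j,a)\}$, and the companion diagonal is $\{(r_i,a),(r_j,c)\}=\{e_i\text{-cell},y_j\}$. This gives $(y_i,e_j)\leadsto(y_j,e_i)$.
\item Take rows $r_j,r_{i+1}$ and columns $c,b$. The target is $\{y_j,(r_{i+1},b)\}$, and the companion is $\{(r_j,b),y_{i+1}\}$. Here $(r_j,b)$ is the second cell of $e_{j-1}$. This gives $(y_j,e_i)\leadsto(y_{i+1},e_{j-1})$.
\end{itemize}
The new pair has gap $(i+1)-(j-1)=d+2$, so $|d|$ drops by $2$. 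The chain therefore ends at $d=0$ or $d=1$, which are exactly the two grounded base cases.

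\emph{Case $i>j+1$.} Take rows $r_i,r_{j+1}$ and columns $c,b$. The target is $\{y_i,(r_{j+1},b)\}$, and the companion is $\{(r_i,b),y_{j+1}\}$. Here $(r_i,b)$ belongs to $e_{i-1}$. This gives $(y_i,e_j)\leadsto(y_{j+1},e_{i-1})$. The new gap is $(j+1)-(i-1)=-(d-2)$, so $|d|$ again strictly decreases. The result lands either in a base case or in the case $i<j$, which is already handled by the induction.

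Reading each chain backwards, every rectangle step has its companion certified zero at an earlier stage. So the closure certifies all the required relations as a least fixed point.

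The main thing to check carefully is groundedness. In the $i<j$ case, the first transfer alone does not decrease the gap: it only swaps the roles of the side index and the edge index. The measure decreases only after pairing it with the second transfer. So the induction must be organised on $|d|$ with the two-step move as its unit, so that no ungrounded cycle $f=-f$ is ever invoked.
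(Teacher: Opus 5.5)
Your proof is correct and is the paper's argument: the same three rectangles (rows $r_i,r_j$ with columns $c,a$; rows $r_{i+1},r_j$ with columns $b,c$; rows $r_{j+1},r_i$ with columns $b,c$) reduce every target to the same-row zeros $y_j\perp_R e_j$ and $y_{j+1}\perp_R e_j$. One boundary slip needs fixing. When $i=j-1$, your second transfer would use rows $r_j=r_{i+1}$, which is not a genuine rectangle, and $|d|$ does not drop by two there. So, as the paper does, stop after the first transfer in that case; it already lands on the grounded pair $(y_{i+1},e_i)$.
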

The point of this lemma is distance reduction. A far-away inner product is transferred to one with a smaller index gap, and the process ends at a same-row zero. The word ``grounded'' means that the transfer chain terminates at such a known zero rather than at an ungrounded relation of the form $f=-f$.
\begin{proof}
Write $f_{ij}=\langle y_i,e_j\rangle$. The target labels are distinct, since no $e_j$ uses column $c$. Same-row relations ground
\[
f_{j,j}=f_{j+1,j}=0.
\]
If $i<j$, the rectangle on rows $r_i,r_j$ and columns $c,a$ gives $f_{ij}=-f_{ji}$. If $i=j-1$, the latter is grounded. If $i<j-1$, a second rectangle on rows $r_{i+1},r_j$ and columns $b,c$ gives
\begin{equation}\label{eq:path-shortening}
f_{ij}=f_{i+1,j-1}.
\end{equation}
The index gap decreases by two. If $i>j+1$, the rectangle on rows $r_{j+1},r_i$ and columns $b,c$ gives $f_{ij}=-f_{j+1,i-1}$, reducing to a grounded or previously treated case. Thus every target has a finite transfer path to a same-row zero. Reading that path backwards is a valid closure derivation.
\end{proof}

Apply Lemma~\ref{lem:path} to the $A$-path with side columns $2,4$, and to the $S$-path with side columns $3,4$. Besides all path-side relations, this gives
\begin{equation}\label{eq:path-boundaries}
A_i\perp_R x_{22},B_3,W,S_t,
\qquad
S_i\perp_R x_{33},x_{34},W,A_t
\quad(0\le i\le t).
\end{equation}
Both types of path edge are orthogonal to every $R_j$. Pairs of $A$-edges, pairs of $S$-edges, and a pair $A_i,S_j$ share column one. For the remaining types of new edges, the companion pairs are
\[
\begin{array}{c|c}
\text{target}&\text{companion labels}\\\hline
U_i,V_j&A_{i-1},S_{j-1}\\
U_i,R_j&R_i,S_{j-1}\\
V_i,R_j&R_i,A_{j-1}
\end{array}
\]
using $R_j$ in row $v_j$ for the middle line and in row $u_j$ for the last. Edges of the same type $U,V,R$ share a column. Consequently all distinct labels in
\begin{equation}\label{eq:chain-set}
\mathcal C=\{A_0,\ldots,A_t,S_0,\ldots,S_t\}
\cup\{U_i,V_i,R_i:1\le i\le t\}
\end{equation}
are certified mutually orthogonal.

\subsection{The interface with the old grid: a column-wise proof}
The new labels are already mutually orthogonal. We next connect them to the old grid. We first prove all relations with the old fourth column, because the other columns can then be reduced to that column by rectangles.
The following argument replaces the former large symbolic interface certificate. Its induction has only sixteen old-row cases, followed by a short boundary calculation and column completion. Write
\[
X_{rc}=\text{the label in old cell }(r,c),\qquad F_r=X_{r4},\qquad F_{16}=W.
\]
Statements involving an old label and a chain label are always understood for distinct labels. If both names denote the same edge, its prescribed value one is already certified.

\begin{lemma}[Fourth-column propagation]\label{lem:fourth}
For every $t\ge1$, every $0\le i\le t$, and every old row $r$, the closure certifies
\begin{equation}\label{eq:fourth-zero}
[A_i,F_r]=[S_i,F_r]=0.
\end{equation}
\end{lemma}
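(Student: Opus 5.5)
The plan is to prove \eqref{eq:fourth-zero} by a double induction: along the two paths (in the index $i$) and over the sixteen old rows $r$, in a fixed order. Each target is transferred by genuine rectangles until it reaches either a same-line zero or a relation already certified inside the chain set $\mathcal C$.

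\textbf{Step 1: the basic transfer along a path.} For $0\le i<t$, the cell $(a_{i+1},3)$ of $A_i$ lies in row $u_{i+1}$, whose fourth cell carries $R_{i+1}$. The rectangle on rows $u_{i+1},r$ and columns $3,4$ gives
\[
[A_i,F_r]=-[R_{i+1},X_{r3}].
\]
Using instead the representative of $R_{i+1}$ in row $v_{i+1}$, the rectangle on rows $v_{i+1},r$ and columns $1,4$ has companion $(v_{i+1},1),(r,4)$, that is, $S_{i+1}$ and $F_r$. This links the $A$-obligation at index $i$ to the $S$-obligation at index $i+1$. Symmetrically, $S_i$ at $(s_{i+1},2)$ is linked through $R_{i+1}$ in row $u_{i+1}$ (columns $2,4$, where the companion cell is the one-edge $U_{i+1}$ or $A_{i+1}$ at $(u_{i+1},1)$) to an $A$-obligation at index $i+1$. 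Composing these two transfers pushes every obligation monotonically toward index $t$.

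\textbf{Step 2: the terminal row.} At $i=t$ both $A_t$ and $S_t$ occupy cells of row $w$, and so does $W=F_{16}$. Hence $[A_t,W]=[S_t,W]=0$ are same-line zeros, which also matches \eqref{eq:path-boundaries}. For an old row $r\ne w$, the rectangle on rows $w,r$ and columns $3,4$ (respectively $2,4$) gives
\[
[A_t,F_r]=-[W,X_{r3}],\qquad [S_t,F_r]=-[W,X_{r2}].
\]
The relation $[W,X_{rc}]$ is then reduced by one more rectangle through column one of row $w$, where $B_9$ sits. This ends in a relation between the old label $B_9$ (or $X_{r1}$) and an old label of row $r$. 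Such relations are either grounded by a common line or already obtained in the proof of Lemma~\ref{lem:resolution-new}, for instance the orthogonality of $U_t$ and $B_8$. I would organize the sixteen old rows into the order in which these terminal relations become available. The first rows are those where $X_{r2}$ or $X_{r3}$ is a one-edge in the same column as a known label, and the later rows are those needing earlier rows, as in Table~\ref{tab:core-init}. Each case would be recorded as one reduction chain read backwards.

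\textbf{Step 3: closing the induction.} Once every row $r$ is settled at index $t$, Step 1 propagates the result down to $i=t-1,\ldots,0$. The intermediate relations of the form $[R_{i+1},X_{r3}]$ and $[U_{i+1},F_r]$ that appear along the way are handled by the same two rectangles, because $U_{i+1}$ and $V_{i+1}$ are single cells in columns $2,3$ and share rows with path cells. At $i=0$ the rows $2,3$ are old rows, so the port cells $(2,1),(3,1)$ have companions $B_3=X_{24}$ and $X_{34}$. These must be checked by hand; for the $S$-path, \eqref{eq:path-boundaries} already supplies $S_i\perp_R x_{34}$.

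\textbf{Main obstacle.} The step I expect to be hardest is Step 2, for the old rows whose third or second cell is itself a two-edge reached only through the chain, namely $B_3,B_8,B_9$. For these rows the naive reduction can loop back to an $A$- or $S$-obligation at the same index, producing an ungrounded equation $f=-f$. To avoid this, I would first try to route each such case through the alternative representative of the old two-edge, as was done for $A_t$ via $(5,2)$. I would process those rows last, so that the needed old–old orthogonalities, which survive from the seed by the argument of Lemma~\ref{lem:delete}, are already in hand.
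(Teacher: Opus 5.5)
Your proposal has a genuine gap. The index-shifting mechanism of Step~1 is miscomputed, and the ground you propose at $i=t$ rests on old--old relations for which you give no valid certificate.

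In Step~1, the rectangle on rows $v_{i+1},r$ and columns $1,4$ has diagonals $(R_{i+1},X_{r1})$ and $(S_{i+1},F_r)$. It does not contain the pair $(R_{i+1},X_{r3})$ produced by your first rectangle. Transferring $(R_{i+1},X_{r3})$ through row $v_{i+1}$ actually gives $[A_i,F_r]=[V_{i+1},F_r]$. Symmetrically, $[S_i,F_r]=-[R_{i+1},X_{r2}]=[U_{i+1},F_r]$, since the rectangle on rows $u_{i+1},r$ and columns $2,4$ has $U_{i+1}$, not $A_{i+1}$, as its corner in row $u_{i+1}$. These single-cell relations only cycle back, which is also why the ``same two rectangles'' of your Step~3 ground nothing. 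An index change happens only through an old two-edge. If $X_{r3}=B_j$ has its other cell at $(s,1)$, then $[A_i,F_r]=[A_{i+1},F_s]=[S_{i+1},F_s]$, which changes the row as well. If the other cell is in column two, you stay at index $i$. Rows $1,2,3$ have one-edges in column three. So there is no row-by-row push toward $i=t$.

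At $i=t$, reducing $[W,X_{r3}]$ through $(w,1)$ lands on relations such as $[B_9,F_s]$. These do not ``survive from the seed by Lemma~\ref{lem:delete}'': in $K_t$ the cells $(2,1),(3,1),(w,2),(w,3)$ carry the ports $A_0,S_0,S_t,A_t$, not holes. Hence a seed rectangle whose companion met $B_2$ or $B_4$ now needs a port relation, which is exactly what you are proving. For instance, $[A_t,F_{12}]=-[W,B_{19}]=[B_9,x_{12,4}]$. The seed's certificate for $(x_{12,4},B_9)$ in Table~\ref{tab:seed-chains-1} passes through $(x_{16,4},B_{19})$ to $(x_{12,4},B_2)$. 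Replayed in $K_t$, the old cell $(16,3)$ is $(w,3)$, which carries $A_t$, so you are back at $[A_t,F_{12}]$. The paper obtains old--old relations only afterwards, via Lemma~\ref{lem:split-replay}, whose hypotheses include the port relations proved here. The same defect affects $S_0$. Through $(3,1)$ its companion is $(x_{34},X_{r1})$, again an old--old pair, and $S_i\perp_R x_{34}$ settles only the target $(S_0,F_3)$.

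The missing idea is to ground at $i=0$ and induct upward.
\begin{itemize}
\item For $A_0$ at $(2,1)$, the companion of $(A_0,F_r)$ is $(B_3,X_{r1})$. Since $B_3$ has a second cell at $(13,1)$, this gives $[A_0,F_r]=0$ for every $r$ at once. You noticed this companion but postponed it instead of using it as the base.
\item For $i\ge1$, the rectangles through $R_i$ on rows $u_i,v_i$ and columns $1,4$ give $[A_i,F_r]=-[R_i,X_{r1}]=[S_i,F_r]=:T_i(r)$.
\item An old pair with support $\{(r,1),(s,3)\}$, $\{(r,1),(s,2)\}$ or $\{(r,3),(s,2)\}$ yields $T_i(r)=[A_{i-1},F_s]$, $[S_{i-1},F_s]$ or $T_i(s)$, respectively. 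The last uses $R_{i+1}$, or $W$ when $i=t$.
\item The sixteen old rows can then be ordered so that each reaches a common line, the relations $S_i\perp_R x_{34},W$, an earlier row at the same index, or index $i-1$.
\item Finally, $S_0$ has no $R_0$ bridge and needs its own argument. The paper writes $[S_0,F_r]=-[R_1,X_{r2}]$ and treats old rows $2,4,6,7,8$ separately.
\end{itemize}
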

Here $T_i(r)$ is a bookkeeping device: the same companion pair expresses both $[A_i,F_r]$ and $[S_i,F_r]$. The induction propagates the zero value of $T_i(r)$ through the sixteen old rows and through the chain index $i$.
\begin{proof}
\emph{Step 1: three elementary transfer laws.}
For $1\le i\le t$, the two rectangles using old row $r$ and rows $u_i,v_i$ give
\begin{equation}\label{eq:bridge}
[A_i,F_r]=-[R_i,X_{r1}]=[S_i,F_r]=:T_i(r).
\end{equation}
For a retained old pair with the indicated supports, the same argument yields
\begin{align}
B_j=\{(r,1),(s,3)\}&\quad\Longrightarrow\quad
T_i(r)=[A_{i-1},F_s],\label{eq:old13}\\
B_j=\{(r,1),(s,2)\}&\quad\Longrightarrow\quad
T_i(r)=[S_{i-1},F_s].\label{eq:old12}
\end{align}
A pair with support $\{(r,3),(s,2)\}$ gives
\begin{equation}\label{eq:old23}
T_i(r)=T_i(s).
\end{equation}
Indeed, for $i<t$ both sides reduce to $-[R_{i+1},B_j]$, using the column-three representative of $A_i$ and the column-two representative of $S_i$. For $i=t$ use their terminal representatives in row $w$; both sides then reduce to $-[W,B_j]$. Thus (\ref{eq:old23}) remains valid at the last block, without assuming a nonexistent next block. These are pairs of ordinary rectangle transfers, not solutions of a simultaneous linear system.

For $i\ge1$, $A_i$ and $S_i$ are orthogonal to every retained $B_j$. Common columns prove this except for $A_i,B_1$ and $S_i,B_{11},B_{13}$. Each exceptional $B_j$ occurs in column four, and (\ref{eq:bridge}) changes to the other chain type, which does have a common column. This settles the exceptions.

\emph{Step 2: the $A_0$ boundary.}
Use $A_0$ at $(2,1)$ and $F_r$ at $(r,4)$. The companion is $B_3$ and $X_{r1}$, whose prescribed value is known from column one, because $B_3$ also occurs at $(13,1)$. When $r=13$, this companion is the selected pair $B_3$ and has value one, not zero; the defect formulation gives the same valid transfer. For $r=2$ the target is already a same-row pair. Hence
\begin{equation}\label{eq:A0-fourth}
[A_0,F_r]=0\quad(1\le r\le16).
\end{equation}

\emph{Step 3: the sixteen old rows.}
For $i\ge1$ the following table proves $T_i(r)=0$. Process $i=1,2,\ldots,t$, and within each $i$ read the rows in the displayed order. A reference to $A_{i-1}$ is grounded by (\ref{eq:A0-fourth}) when $i=1$, and otherwise by the preceding induction step.
\begin{center}\small
\renewcommand{\arraystretch}{1.18}
\begin{tabular}{c p{10.6cm}}\hline
Old row $r$&Grounding or reduction\\\hline
$1,2,4,7,8$&$F_r$ is respectively $B_1,B_3,B_7,B_{13},B_{11}$; use Step 1.\\
$3,16$&The $S$-path gives $S_i\perp_R x_{34},W$; use (\ref{eq:bridge}).\\
$13$&$T_i(13)=-[R_i,B_3]=0$, since both labels have column-four representatives.\\
$5$&$T_i(5)=T_i(13)$ by $B_8$ and (\ref{eq:old23}).\\
$6$&$T_i(6)=[A_{i-1},F_4]=[A_{i-1},B_7]=0$ by $B_6$ and a common column.\\
$11$&$T_i(11)=T_i(6)$ by $B_{10}$ and (\ref{eq:old23}).\\
$9$&$T_i(9)=[A_{i-1},F_{11}]=0$ by $B_{15}$ and induction.\\
$10$&$T_i(10)=T_i(9)$ by $B_{14}$ and (\ref{eq:old23}).\\
$12$&$T_i(12)=[A_{i-1},F_{12}]=0$ by $B_{19}$ and induction.\\
$15$&$T_i(15)=[S_{i-1},F_3]=0$ by $B_5$ and the $S$-path.\\
$14$&$T_i(14)=T_i(15)$ by $B_{20}$ and (\ref{eq:old23}).\\\hline
\end{tabular}
\end{center}
For instance, the within-row pair $B_{19}$ is precisely what gives the simple recurrence at row $12$. A row-degenerate pair is therefore an aid to this proof, not an irregularity to be excluded.

\emph{Step 4: the $S_0$ boundary.}
Three short calculations handle the exceptional old column-two singletons. First, all pairs in the chain
\begin{equation}\label{eq:boundary-four}
(A_0,x_{82})\leadsto(x_{22},B_7)\leadsto(B_3,x_{42})
\leadsto(B_8,x_{41})\leadsto(B_6,x_{51})
\end{equation}
are certified by reading backwards from the terminal common-column pair. The rectangles use, respectively, rows $2,8$ and columns $1,2$; rows $2,4$ and columns $2,4$; rows $4,13$ and columns $1,2$; and rows $4,5$ and columns $1,3$.

Next $S_0\perp_R B_{11}$ follows from rows $3,7$, columns $1,3$, whose companion is $x_{33},B_{12}$. The latter share column three. The reductions
\begin{align}
(S_0,B_{13})&\leadsto(R_1,x_{72})\leadsto(U_1,B_{13})
\leadsto(A_0,x_{82}),\label{eq:S0B13}\\
(S_0,F_6)&\leadsto(x_{34},B_6)\leadsto(x_{33},B_7)
\leadsto(S_0,B_{13})\label{eq:S0F6}
\end{align}
are now grounded. In (\ref{eq:S0B13}), the consecutive rectangles use rows $v_1,7$, columns $2,4$; rows $u_1,7$, columns $2,4$; and rows $u_1,8$, columns $2,3$. Equation (\ref{eq:S0F6}) uses rows $3,6$, columns $1,4$; rows $3,4$, columns $3,4$; and rows $3,8$, columns $1,3$.

At this stage $R_1$ is orthogonal to every old cell in columns $1,3,4$: rectangle transfer reduces these targets to $[A_1,F_r]$, $[A_0,F_r]$, or a same-column relation. In old column two, every nonsingleton label has another representative in one of these three columns. Thus $[R_1,X_{r2}]=0$ unless $r\in\{2,4,6,7,8\}$. Transferring via row $v_1$ gives $[S_0,F_r]=0$ for all other rows. For the five exceptions, $F_2=B_3$ and $F_4=B_7$ share column one with $S_0$; $F_6$ is handled by (\ref{eq:S0F6}); and $F_7=B_{13},F_8=B_{11}$ were handled above. This completes every boundary case.
\end{proof}

\begin{lemma}[Completion of the interface]\label{lem:interface-new}
Every chain label in $\mathcal C$ is orthogonal to every distinct label occurring in the old rows.
\end{lemma}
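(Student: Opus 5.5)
The plan is to treat the chain labels by type and push every relation with an old cell back to the fourth-column relations of Lemma~\ref{lem:fourth}. One rectangle with the old fourth column will do this in each case. Fix an old row $r$ and an old column $c\in\{1,2,3\}$, and write $X=X_{rc}$ for the label there.

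\emph{Path labels.} For $A_i$ and $S_i$ with $i\ge1$, consider the rectangle on old row $r$ and the path row of $A_i$ or $S_i$, using columns $c$ and $4$. This rectangle pairs $[A_i,X_{rc}]$ with $[R_i \text{ or } Z, F_r]$, where $Z$ is the label in the chain row at column $c$. If $Z$ is itself a path label, Lemma~\ref{lem:fourth} grounds the companion. If $Z$ is $R_i$, $U_i$ or $V_i$, then $Z$ is handled in the next step. Many cases are also immediate by a shared column: column one for all $A_i,S_i$; column three for $A$-labels; column two for $S$-labels.

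For $A_0$ and $S_0$ I will use their old representatives at $(2,1)$ and $(3,1)$. A rectangle with old row $r$ sends $[A_0,X_{rc}]$ to $[X_{2c},X_{r1}]$. This companion is a relation among old labels only. Since the old core is a deletion-compatible piece of the admissible seed, I expect to ground these companions by replaying the corresponding parts of the seed certificate. Where the seed certificate used $B_2$ or $B_4$, I will substitute $A_0$ and $S_0$, together with the endpoint identities $A_t$ at row $w$ and $S_t$ at row $w$.

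\emph{Labels $R_i$, $U_i$, $V_i$, $W$.} For $R_i$ and an old cell in column $c\ne4$, the rectangle on rows $r,u_i$ and columns $c,4$ has companion $(A_{i-1},F_r)$ when $c=3$, $(U_i,F_r)$ when $c=2$, and $(A_i,F_r)$ when $c=1$. The first and third are zero by Lemma~\ref{lem:fourth}. The second can instead be taken via row $v_i$, which gives $(S_{i-1},F_r)$, also zero. The fourth column itself is a common column.

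For $U_i$, the only nontrivial cells lie in columns $1,3,4$. Using row $u_i$, the companions are $(A_i \text{ or } A_{i-1} \text{ or } R_i, X_{r2})$. These are path-versus-old relations already obtained above, or $R_i$-versus-old relations just proved. $V_i$ is symmetric via row $v_i$, using $S_i$, $S_{i-1}$ and $R_i$. $W$ lies in row $w$, whose other labels are $B_9$, $S_t$ and $A_t$; the rectangles reduce $[W,X_{rc}]$ to $[X_{w c},F_r]$, which are already handled. I will order the argument as: path labels in column four (the lemma), then $R_i$, then path labels in columns $1$--$3$, then $U_i$, $V_i$, $W$. This keeps every reduction grounded rather than circular.

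\emph{Main obstacle.} I expect the hardest part to be the $A_0,S_0$ boundary against old cells in columns two and three. There the companion lands on purely old labels, and it is not automatic that the seed certificate still applies after $B_2$ and $B_4$ are opened. The first approach will be to route these relations through $R_1$ via rows $u_1$ and $v_1$, as in Step~4 of Lemma~\ref{lem:fourth}. This turns $[A_0,X]$ into $[R_1,\cdot]$ or $[U_1,\cdot]$ relations proved earlier. I will then treat the few exceptional singletons, those in rows $2,4,6,7,8$, by short explicit chains like \eqref{eq:S0B13}.
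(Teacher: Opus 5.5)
\textbf{Verdict: there is a genuine gap, in the path-label step, which is where the real content of this lemma lies.}

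Your $R_i$ step (companions $(A_i,F_r)$, $(S_{i-1},F_r)$, $(A_{i-1},F_r)$, all zero by Lemma~\ref{lem:fourth}) matches the paper. So does your $U_i,V_i$ step (transfer to $X_{r2}$, resp.\ $X_{r3}$, paired with $A_i,A_{i-1},R_i$, resp.\ $S_i,S_{i-1},R_i$).

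\emph{The invalid rectangle.} The rectangle you use, on rows $r,u_i$ and columns $c,4$, has diagonals $\{X_{rc},R_i\}$ and $\{F_r,Z\}$. It does not contain $A_i$ at all, so it is just the $R_i$ transfer again and says nothing about $[A_i,X_{rc}]$.

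\emph{What remains, and why your ordering does not ground it.} After common columns and Lemma~\ref{lem:fourth}, what remains for $A_i$ is the old column-two one-edges at rows $2,4,6,7,8$. For $S_i$ it is the column-three one-edges at rows $1,2,3$. This holds for every $i$, not only $i=0$. A genuine rectangle through the same-block representative $(u_i,1)$ gives, for instance, $[A_i,x_{42}]=-[U_i,x_{41}]$. That is a pair of one-edges whose only rectangle is this same one, and your $U_i$ step sends it straight back to $[A_i,x_{42}]$: an ungrounded two-cycle. Likewise $[S_i,x_{13}]=-[V_i,x_{11}]$. So the order ``path labels, then $U_i,V_i$'' does not make the argument grounded.

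\emph{The missing idea.} Use the \emph{other} representative of the path edge, in the next block (or in row $w$ when $i=t$). Then follow the second representative of the old pair that appears in the companion. The required chains are:
\begin{itemize}
\item For $0\le i<t$, $A_i$ at $(u_{i+1},3)$ gives $(A_i,x_{62})\leadsto(U_{i+1},B_{10})$, grounded in column two by $B_{10}$ at $(11,2)$.
\item For $0\le i<t$, $(A_i,x_{42})\leadsto(U_{i+1},B_6)\leadsto(A_{i+1},x_{62})$.
\item For $1\le i<t$, $(A_i,x_{82})\leadsto(U_{i+1},B_{13})\leadsto(R_{i+1},x_{72})$.
\item For $i\ge1$, $(A_i,x_{72})\leadsto(U_i,B_{12})\leadsto(A_{i-1},B_8)$, ending in column three.
\item At $i=t$, row $w$ yields the companions $(S_t,B_{10})$, $(S_t,B_6)$, $(S_t,B_{13})$ instead.
\item For $S_i$: Lemma~\ref{lem:path} handles $x_{33}$; $(S_i,x_{23})\leadsto(V_i,A_0)$, or $(x_{33},A_0)$ at $i=0$; and $(S_i,x_{13})\leadsto(V_{i+1},B_1)\leadsto(R_{i+1},x_{13})$, with endpoint $(A_t,B_1)$.
\end{itemize}
None of these chains, nor the idea behind them, appears in your proposal.

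\emph{The $A_0,S_0$ fallback is circular.} Replaying the seed certificate is exactly Lemma~\ref{lem:split-replay}. Its hypothesis is that every port label already has its correct relation to every old label, which is the present lemma. Concretely, your companion $[x_{22},x_{41}]$ for $(A_0,x_{42})$ is grounded in the seed only through $(B_2,x_{42})$. In $K_t$ that is $(A_0,x_{42})$ again. The paper needs no replay here: $i=0$ runs through the same chains, with $(A_0,x_{72})\leadsto(x_{22},B_{12})\leadsto(x_{23},B_8)$ and \eqref{eq:boundary-four} for $x_{82}$.

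\emph{$W$ is not a chain label.} $W=F_{16}$ is an old label, not a member of $\mathcal C$. Its relations with old labels are old--old; your $c=1$ reduction lands on $[B_9,F_r]$. They belong to the replay, not to this lemma.
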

This final interface lemma is best read in four blocks: first $R_i$, then $A_i$, then $S_i$, and finally $U_i,V_i$. In each block, common-column cases are immediate; only the exceptional singleton cells require a displayed reduction chain.
\begin{proof}
First take $R_i$ and an old cell $X_{rc}$. For $c=1,2,3$, use $R_i$ in the appropriate row of its block. Its companion pair is, respectively,
\[
(A_i,F_r),\qquad(S_{i-1},F_r),\qquad(A_{i-1},F_r),
\]
all certified by Lemma~\ref{lem:fourth}. Column four is immediate. Thus every $R_i$ is orthogonal to every old label.

For $A_i$, common columns handle old columns one and three, and Lemma~\ref{lem:fourth} handles column four. Every retained two-edge has a representative in these columns; the port labels are already handled by (\ref{eq:chain-set}). It remains to treat only the five old one-edges in column two, at rows $2,4,6,7,8$. The first is covered by the $A$-path lemma. The others have the following reductions; statements at $i=t$ are given separately so that every endpoint is explicit.
\begin{center}\small
\renewcommand{\arraystretch}{1.2}
\begin{tabular}{c p{10.6cm}}\hline
Target&Reduction and grounding\\\hline
$A_i,x_{62}$&For $i<t$ the companion is $U_{i+1},B_{10}$, a common-column pair; for $i=t$ it is $S_t,B_{10}$, also sharing column two.\\
$A_i,x_{42}$&For $i<t$ reduce to $U_{i+1},B_6$, then to $A_{i+1},x_{62}$; for $i=t$ the companion $S_t,B_6$ shares column one.\\
$A_i,x_{72}$&Using the column-one representative, reduce first to $Y_i,B_{12}$, then to $Z_i,B_8$, where $(Y_i,Z_i)=(U_i,A_{i-1})$ for $i\ge1$ and $(Y_0,Z_0)=(x_{22},x_{23})$. The last pair shares column three.\\
$A_i,x_{82}$&The case $i=0$ is (\ref{eq:boundary-four}). For $1\le i<t$, reduce to $U_{i+1},B_{13}$ and then to $R_{i+1},x_{72}$. For $i=t$, the companion is $S_t,B_{13}$, handled in Step 1 of Lemma~\ref{lem:fourth}.\\\hline
\end{tabular}
\end{center}
These reductions are valid also at $t=1$; no interior block is required.

For $S_i$, only the old column-three one-edges at rows $1,2,3$ remain. Row $3$ is covered by the $S$-path. For row $2$, use the column-one representative of $S_i$: its companion is $V_i,A_0$ when $i\ge1$, and $x_{33},A_0$ when $i=0$, both common-column pairs. For row $1$ and $i<t$, using the column-two representative gives $V_{i+1},B_1$; transfer using $B_1$ at $(1,4)$ reduces to $R_{i+1},x_{13}$. If $i=t$, the companion is $A_t,B_1$, already settled. The retained pairs $B_{11},B_{13}$ at $i=0$ were explicitly handled in Lemma~\ref{lem:fourth}; hence there is no omitted disjoint-column case.

Finally, a target $U_i,X_{rc}$ with $c\ne2$ transfers to $X_{r2}$ paired with $A_i,A_{i-1}$, or $R_i$, according as $c=1,3,4$. A target $V_i,X_{rc}$ with $c\ne3$ transfers to $X_{r3}$ paired with $S_i,S_{i-1}$, or $R_i$. All of these relations have just been proved. Same-column cases are immediate. This completes the interface by four column cases, rather than by a large precomputed list of old-label obligations.
\end{proof}

\subsection{Completing the proof and obtaining every odd order}
The replay lemma reconnects the extended construction to the finite seed certificate. Once the even-order family is certified, deleting one fixed old row gives the odd-order family and leaves exactly one hole.
\begin{lemma}[Replaying an old certificate after splitting pairs]\label{lem:split-replay}
Suppose an admissible old grid is embedded with the same occupied cells, but some selected pairs are split into distinct resolved port labels. Assume that all retained old pairs resolve, and that every port label has its correct relation to every old label, including the other ports. Then all relations between distinct retained old labels follow by replaying the old certificate.
\end{lemma}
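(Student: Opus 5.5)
We will argue by induction along the old certificate: replay it step by step in the new grid, and show that each derived conclusion that concerns only retained old cells is still derivable. The old certificate is a finite sequence of rule applications (line, saturation, rectangle transfer, complementary pair) on the old grid. The new grid has the same occupied cells, and they sit in the same row and column incidences relative to one another. The only difference is the value $\delta$ on pairs of cells that formerly formed a selected old pair and now belong to distinct port labels: there the old value one is replaced by zero.

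We therefore classify each step of the old derivation by whether it touches a split pair. We interpret every old conclusion about retained cells in the new grid through labels.
\begin{itemize}
\item An old identification $p\sim q$ of the halves of a retained pair becomes the new identification, which holds by the hypothesis that all retained old pairs resolve.
\item An old identification of the halves of a split pair has no new counterpart. Any later old step that used it, through saturation or as a value-one companion, is replaced by the corresponding statement about port labels.
\end{itemize}

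The key step is the rectangle rule. Take an old rectangle whose target concerns retained old labels and whose companion diagonal was certified in the old derivation.
\begin{itemize}
\item If the companion involves only retained old labels, the induction hypothesis supplies its value in the new grid.
\item If the companion involves a port cell, its new prescribed value (zero between distinct labels, or one for a port with itself) is exactly the "correct relation" given by hypothesis. This covers the case where the old companion was a split pair with old value one, which now has value zero.
\end{itemize}
In either case the companion's prescribed value in the new grid is certified. The rectangle identity in defect form, $[X,Y]=-[X',Y']$, then transfers correctly, because the target's prescribed value is unchanged when the target involves only retained labels. Saturation steps carry over because they only move between representatives of resolved classes. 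Line steps on retained cells are unchanged.

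The main obstacle is the bookkeeping at rectangles whose target diagonal itself contains a port cell while the conclusion is recorded as a relation of retained labels via saturation. There the old derivation may have used the identity of a split pair as a bridge between two cells. The plan is to strengthen the induction statement: for every old derived fact $p\perp_R q$ or $p\sim q$, the new closure certifies the corresponding label relation $[\lambda(p),\lambda(q)]=0$ in the new sense. Here $\lambda$ sends each cell to its new label. Facts whose labels include a port hold by hypothesis, so the induction only ever needs to push facts about retained labels, and the hypotheses cover every port-involving premise. Finally, for distinct retained old edges $e,f$, the old certificate contains a certified orthogonal pair of representatives, so the replay yields $e\perp_R f$ in the new closure.
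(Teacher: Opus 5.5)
Your proposal is correct and follows essentially the paper's argument. Both proofs induct along the old derivation. Conclusions and companions involving a port are supplied by the hypothesis, including the formerly selected diagonal, which now carries value zero, and companions involving only retained labels are supplied by induction. The ``obstacle'' you flag does not actually arise: old equivalence classes never leave a single selected edge, so a saturation step through a split pair can only conclude a port relation, which the hypothesis already gives.
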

\begin{proof}
Proceed by induction on the old derivation. Omit the identifications of pairs that were split. A conclusion involving a port is already supplied by the hypothesis. Identifications and saturation involving retained labels are unchanged. For a rectangle concluding a relation between retained labels, a companion not involving a port retains its old prescribed value and is supplied by induction. A companion involving ports has its \emph{new} prescribed value by hypothesis. In particular, a formerly selected diagonal may now have prescribed value zero rather than one; it is incorrect to keep its old value, but the rectangle rule requires only the new correct value, which is available. Thus every required target relation between retained labels survives. This is a derivation-level argument and does not require enumerating an old finite trace again.
\end{proof}

\begin{proposition}[All even orders from sixteen]\label{prop:even-new}
Every $K_t$, $t\ge0$, satisfies $\rw$. Consequently, $\zr(16+2t,4)=43+5t$.
\end{proposition}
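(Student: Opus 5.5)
For $t=0$ the configuration is the seed $K_0$, and I take its admissibility from the elementary certificate in Appendix~\ref{app:seed}. Its counts are $|E_1|=22=z(16,4)$ by Lemma~\ref{lem:skeleton}, $|E_2|=21$, and $N=43$. For $t\ge1$ I assemble the three parts of $\rw$ (Definition~\ref{def:rw}) from the lemmas of this section in four steps.

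\emph{Step 1: resolution.} Lemma~\ref{lem:init-new} and Lemma~\ref{lem:resolution-new} resolve every selected two-edge. Each identification joins the two halves of one prescribed pair, so distinct selected edges remain in distinct equivalence classes. This gives the first two conditions of $\rw$.

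\emph{Step 2: orthogonality obligations.} I split the pairs of distinct labels into three kinds.
\begin{enumerate}[leftmargin=2em]
\item \emph{Chain--chain pairs.} These are handled by Lemma~\ref{lem:path} and the companion table leading to \eqref{eq:chain-set}.
\item \emph{Chain--old pairs.} These are handled by Lemma~\ref{lem:fourth} together with Lemma~\ref{lem:interface-new}. This includes the ports $A_0,A_t,S_0,S_t$ against every old label. The port--port relations are already in \eqref{eq:chain-set}.
\item \emph{Old--old pairs.} Here I apply Lemma~\ref{lem:split-replay} to the seed certificate. Its hypotheses need checking. The old grid $K_0$ embeds into $K_t$ with the same occupied old cells: rows $1$--$15$ and row $w$ playing the role of row $16$. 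Under this embedding $B_2$ is split into the ports $A_0$ at $(2,1)$ and $A_t$ at $(w,3)$, and $B_4$ is split into $S_0$ at $(3,1)$ and $S_t$ at $(w,2)$. All retained old pairs resolve by Step 1. Every port has its correct relation to every old label and to the other ports by the previous item.
\end{enumerate}

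\emph{Subtle point in the replay.} The seed's rectangles involve only old rows. When the replay uses a companion diagonal such as $\{(2,1),(16,3)\}$, its new prescribed value is zero, since the two cells are now different labels $A_0,A_t$. That value is supplied by \eqref{eq:chain-set}. Lemma~\ref{lem:split-replay} explicitly allows this change. Hence every obligation between retained old labels survives, and the third condition of $\rw$ holds.

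\emph{Step 3: remaining conditions of Definition~\ref{def:zrl}.} By \eqref{eq:new-counts}, $|E_1(K_t)|=m+6=z(m,4)$, and the simplicity condition (S) holds. The one-edge graph is $C_4$-free because the new one-edges $(u_i,2),(v_i,3)$ are singleton rows added to a $C_4$-free seed. Thus $\zr(16+2t,4)\ge N(K_t)=43+5t$.

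\emph{Step 4: upper bound.} Corollary~\ref{cor:cell} with $m=16+2t$ gives $\zr\le\floor{(5m+6)/2}=43+5t$, which yields equality.

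\emph{Main obstacle.} I expect the hardest part to be justifying the replay hypothesis rigorously. One must check that every rectangle in the seed certificate whose companion involves a split port has, in $K_t$, a companion certified to its new value. It must also be certain that nothing in the seed derivation used the identification $B_2$ or $B_4$ as an intermediate in a saturation step. I would handle this by confirming that saturation through a port only ever needs the port's relation to an old label, which is supplied by the interface lemmas.
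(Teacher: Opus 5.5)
Your proposal is correct and follows the paper's proof essentially step for step: the appendix for $t=0$, Lemmas~\ref{lem:init-new} and \ref{lem:resolution-new} for resolution, Lemma~\ref{lem:path} with \eqref{eq:chain-set} and Lemma~\ref{lem:interface-new} for the chain--chain and old--new obligations, Lemma~\ref{lem:split-replay} with the four distinct ports $A_0,A_t,S_0,S_t$ for the old--old obligations, and finally the counts \eqref{eq:new-counts} together with Corollary~\ref{cor:cell}. The worry in your last paragraph is already handled inside Lemma~\ref{lem:split-replay}: every equivalence class consists only of the two halves of one selected edge, so saturation between retained labels never passes through a port.
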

\begin{proof}
The elementary seed proof for $t=0$ is in Appendix~\ref{app:seed}. For $t\ge1$, Lemmas~\ref{lem:init-new} and \ref{lem:resolution-new} resolve every pair without identifying distinct edges. The path argument settles every pair of distinct chain labels, and Lemma~\ref{lem:interface-new} settles all old--new obligations.

Apply Lemma~\ref{lem:split-replay} to the seed, with $B_2$ replaced by the two port labels $A_0,A_t$ and $B_4$ replaced by $S_0,S_t$. For $t\ge1$ these four labels are distinct. All port relations required by the lemma have been proved, including the relations between the two halves of each opened seed pair. Hence all old--old obligations are certified as well. Counts (\ref{eq:new-counts}) and the upper bound complete the extremal assertion.
\end{proof}

\begin{proposition}[All odd orders from fifteen]\label{prop:odd-new}
For every $t\ge0$, deleting old row $12$ of $K_t$, together with the complete two-edges $B_{19},B_{17}$, gives an admissible extremal graph on $15+2t$ rows with one hole and $40+5t$ selected edges.
\end{proposition}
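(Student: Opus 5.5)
The plan is to derive the odd-order family directly from Proposition~\ref{prop:even-new} by the deletion principle of Lemma~\ref{lem:delete}, so that no new closure derivation is needed. Fix $t\ge0$. By Proposition~\ref{prop:even-new} (for $t=0$ via the seed certificate in Appendix~\ref{app:seed}), $K_t$ satisfies $\rw$ on $16+2t$ rows, with $|E_1|=22+2t$, $|E_2|=21+3t$ and $H=0$.

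\emph{Step 1: describe old row $12$.} In every $K_t$ the old row $12$ is unchanged from the seed: it reads $B_{19},B_{17},B_{19},\bullet$. So $B_{19}$ is the row-degenerate pair $\{(12,1),(12,3)\}$, and $B_{17}=\{(12,2),(10,1)\}$. The cell $(12,4)$ is a single one-edge $x_{12,4}$, so row $12$ is one of the degree-one rows of the skeleton.

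\emph{Step 2: delete complete edges.} I would delete the three complete selected edges $B_{19}$, $B_{17}$ and the one-edge $x_{12,4}$, together with all cells of their supports. One-edges are selected edges with singleton support, so Lemma~\ref{lem:delete} applies and $\rw$ is preserved on the retained configuration. After the deletion, row $12$ is entirely empty, and the second part of Lemma~\ref{lem:delete} lets us remove it and relabel the rows. The only other cell freed is $(10,1)$, which becomes the unique hole.

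\emph{Step 3: verify the remaining conditions of Definition~\ref{def:zrl}.} Simplicity and $C_4$-freeness of the one-edge graph are inherited, since we only removed edges. The new order is $m=15+2t\ge15\ge6$, so Lemma~\ref{lem:skeleton} gives $z(m,4)=m+6=21+2t$. This equals $|E_1|=22+2t-1$, so the configuration is limited. In fact the six degree-two rows are untouched, so the skeleton shape is visibly preserved. The counts are $|E_2|=19+3t$, $N=40+5t$ and $H=1$. Finally, Corollary~\ref{cor:cell} with $H=1$ gives $N=(5m+5)/2=40+5t=\floor{(5m+6)/2}$, which is the extremal value.

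\emph{Main obstacle.} The only point needing care is Step 2. One must confirm that Lemma~\ref{lem:delete} covers deleting a one-edge. One must also check that the freed cell $(10,1)$, now a hole, is harmless. The lemma's proof handles exactly this: any rectangle whose companion contains the deleted cell now has a companion automatically certified zero, while every retained pair keeps its prescribed $\delta$ value. Hence every identification and orthogonality obligation among retained edges survives.
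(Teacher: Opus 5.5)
Your proposal is correct and follows the paper's proof essentially step for step. You start from the admissible $K_t$ of Proposition~\ref{prop:even-new}, delete $B_{19}$, $B_{17}$ and the one-edge $x_{12,4}$ via Lemma~\ref{lem:delete} so that $\rw$ is preserved and the empty row can be dropped, observe that old cell $(10,1)$ becomes the unique hole, and finish with $|E_1|=m+6$, $N=40+5t$ and the cell bound. Your added remarks, that one-edges count as selected edges for Lemma~\ref{lem:delete} and that $(5m+5)/2=\floor{(5m+6)/2}$ for odd $m$, only make explicit what the paper leaves implicit.
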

\begin{proof}
Old row $12$ is $(B_{19},B_{17},B_{19},x_{12,4})$. Delete $B_{19}$ and $B_{17}$ completely, and delete the one-edge $x_{12,4}$. The row becomes empty and may be removed by Lemma~\ref{lem:delete}. The other half of $B_{17}$, at old $(10,1)$, is the unique hole; both halves of $B_{19}$ lie in the deleted row. Thus the counts are
\[
m=15+2t,\quad |E_1|=21+2t=m+6,\quad |E_2|=19+3t,
\quad H=1,\quad N=40+5t=\floor{\frac{5m+6}{2}}.
\]
The one-edge graph remains quadrilateral-free, so the limited condition and all admissibility requirements hold. The upper bound proves optimality.
\end{proof}

\begin{proof}[Proof of Theorem~\ref{thm:main}]
Propositions~\ref{prop:even-new} and \ref{prop:odd-new} cover all integers $m\ge15$, with no exceptional base orders. They attain (\ref{eq:upper-new}), hence both $\zr$ and $z_2$ equal the stated integer. Equation (\ref{eq:cell-new}) then forces every extremal graph, not just the displayed family, to have $H=0$ at even orders and $H=1$ at odd orders. No classification up to isomorphism is claimed.
\end{proof}

\section{Conclusions and further directions}\label{sec:conclusion}
Under the strengthened definition, the four-column recursive-line problem has an exact formula for all $m\ge15$. The one-edge skeleton is classical, but the augmented problem has a global condition: resolving every two-edge is not enough; distinct edge classes must also be certified orthogonal. The finite computations show this distinction. The double-chain construction turns the zero-or-one-hole pattern into an infinite family. The same formula determines $z_2$ in this range, but it does not settle the full biquadratic SOS-rank problem.

\subsection{The exceptional order and the stabilization threshold}
The most immediate finite question is the following.
\begin{problem}\label{prob:14}
Is $\zr(14,4)=37$ or $\zr(14,4)=38$? Equivalently, does any extremal $14\times4$ one-edge skeleton admit a hole-free pairing of its thirty-six free cells satisfying $\rw$?
\end{problem}
A thirty-eight-edge witness would settle the question positively. A negative answer requires a proof excluding every one of the fifteen skeleton types, not just an unsuccessful heuristic search or the completed exclusion of $(0,0,0,8)$.

There is a sharper consequence of the existing table. Define the stabilization threshold
\[
M_* = \min\{M\ge6:\ \zr(m,4)=\floor{(5m+6)/2}\text{ for every }m\ge M\}.
\]
Theorem~\ref{thm:main} ensures existence. Since the bound is not attained at $m=12$, is attained at $m=13$, and is attained for all $m\ge15$, the present results imply
\[
M_*\in\{13,15\}.
\]
More precisely, $M_*=13$ if $\zr(14,4)=38$, and $M_*=15$ if $\zr(14,4)=37$. Thus settling a single finite case determines the least eventual threshold; the theorem alone does not prove that fifteen is minimal.

\subsection{Structure, certificates, and higher widths}
A structural classification of the extremal augmented graphs remains open. The number of holes is forced by Theorem~\ref{thm:main}, but their possible positions, the allowable singleton-row distributions, and the nonisomorphic two-edge patterns are not classified. It is natural to ask whether every sufficiently large extremal graph has a bounded-size portion that can be reduced by a local rewiring operation, or whether genuinely different infinite families occur.

The double-chain construction separates a reusable path-transport mechanism from the finite seed. It remains useful to seek a conceptual classification of seeds that support the same extension, or a shorter structural proof of the sixteen-row seed instead of an explicit finite derivation. Another direction is formal verification of the ordinary mathematical argument. These are simplification and verification questions, not unproved steps in the present infinite-family proof.

For fixed $n\ge5$, the corresponding ordinary skeleton has $\binom n2$ pair-rows once $m\ge\binom n2$, and the cell bound \cite{LQrevision26} becomes
\[
\zr(m,n)\le\floor{\frac{(n+1)m+\binom n2}{2}}.
\]
It is worth determining which widths admit eventual attainment and which admit reusable bounded-interface constructions. Such attainment must not be presumed for every width: the revised three-column theorem gives $\zr(m,3)=2m$, whereas cell counting allows $2m+1$ \cite{LQrevision26}. The four-column result therefore reflects more than a general parity principle.

Finally, the relationship between $z_2$ and $\zr$ remains unresolved outside the families in which both meet a common upper bound. Failure of the recursive certificate does not by itself prove that the displayed SOS representation is reducible. Conversely, the equality proved here does not determine $\bsr(m,4)$, since general SOS forms need not have the doubly simple limited structure. Exact Gram-matrix methods and additional sound coefficient rules may be useful for separating these questions.

\bigskip

\appendix
\section{An elementary verification of the sixteen-row seed}\label{app:seed}
The notation $x_{rc}$ denotes the one-edge at $(r,c)$, with $x_{16,4}=W$. This appendix supplies the finite base used in Section~\ref{sec:general}. The tables are explicit applications of the line and rectangle rules to the displayed sixteen-row matrix; they are not a reference to an external computational decision. They can be checked directly from Figure~\ref{fig:seed}. The optional verification code merely cross-checks the same printed derivations.

\subsection{Identification of all twenty-one pairs}
The core initialization in Lemma~\ref{lem:init-new} is valid in the unsplit seed as well: the table uses only the first fifteen rows, where no prescribed pair value changes. After those sixteen pairs resolve, the remaining five resolve in the following order. As in Table~\ref{tab:core-init}, the second column begins with the companion diagonal of the selected pair and ends with a grounded same-line pair.
\begin{center}\small\renewcommand{\arraystretch}{1.25}
\begin{tabular}{c l}\hline
Resolve&Companion reductions\\\hline

$B_{4}$&$(B_{5},p_{16,1})$\\[3pt]
$B_{9}$&$(B_{4},x_{5,1})$\\[3pt]
$B_{8}$&$(B_{12},B_{9})$\\[3pt]
$B_{2}$&$(B_{9},x_{2,3})\leadsto (B_{8},x_{2,2})$\\[3pt]
$B_{3}$&$(B_{2},x_{13,4})\leadsto (B_{12},x_{16,4})\leadsto (B_{13},B_{9})\leadsto (B_{2},B_{7})$\\[3pt]
\hline\end{tabular}\end{center}
For $B_3$, the last reduction uses rows $8,16$ and columns $1,3$; its companion is $B_7,B_2$, a common-column pair. Thus all twenty-one two-edges resolve. Each identification joins only the two cells of a selected edge.

\subsection{Orthogonality of two-edges}
All but twenty-five of the $\binom{21}{2}=210$ distinct two-edge pairs share a row or column. Table~\ref{tab:seed-BB} handles every exception. For an operator $\rho_{r,s}^{c,d}$, use the genuine rectangle on rows $r,s$ and columns $c,d$, changing the current diagonal to its companion. Operators are applied from left to right, but their certificates are read from the final grounded pair backwards. The initial labels, rectangle word, and final pair therefore specify every step, including the longer reductions. Every final pair is grounded by a common line after the resolutions above.

\begin{table}[t]\centering\small
\setlength{\tabcolsep}{6pt}\renewcommand{\arraystretch}{1.22}
\begin{tabular}{c p{7.7cm} l}\hline
Target&Rectangle word&Grounded pair\\\hline

$(B_{1},B_{2})$&$\rho_{1,2}^{1,4}$&$(B_{3},x_{1,1})$\\[4pt]
$(B_{1},B_{6})$&$\rho_{1,4}^{3,4}\;\rho_{1,8}^{1,3}\;\rho_{1,7}^{1,4}\;\rho_{1,13}^{2,3}$&$(B_{8},x_{1,3})$\\[4pt]
$(B_{1},B_{12})$&$\rho_{1,13}^{2,3}$&$(B_{8},x_{1,3})$\\[4pt]
$(B_{1},B_{15})$&$\rho_{1,11}^{2,3}$&$(B_{10},x_{1,3})$\\[4pt]
$(B_{1},B_{18})$&$\rho_{1,15}^{2,3}$&$(B_{20},x_{1,3})$\\[4pt]
$(B_{1},B_{19})$&$\rho_{1,12}^{1,2}$&$(B_{17},x_{1,1})$\\[4pt]
$(B_{10},B_{3})$&$\rho_{6,13}^{1,3}$&$(B_{12},B_{6})$\\[4pt]
$(B_{14},B_{3})$&$\rho_{9,13}^{1,3}$&$(B_{12},B_{15})$\\[4pt]
$(B_{16},B_{3})$&$\rho_{10,13}^{1,3}$&$(B_{12},B_{17})$\\[4pt]
$(B_{20},B_{3})$&$\rho_{13,14}^{1,3}$&$(B_{12},B_{21})$\\[4pt]
$(B_{11},B_{4})$&$\rho_{3,7}^{1,3}$&$(B_{12},x_{3,3})$\\[4pt]
$(B_{13},B_{4})$&$\rho_{8,16}^{2,3}\;\rho_{2,8}^{1,2}\;\rho_{2,4}^{2,4}\;\rho_{4,13}^{1,2}\;\rho_{4,5}^{1,3}$&$(B_{6},x_{5,1})$\\[4pt]
$(B_{11},B_{5})$&$\rho_{7,15}^{1,3}$&$(B_{12},B_{18})$\\[4pt]
$(B_{13},B_{5})$&$\rho_{8,15}^{1,3}$&$(B_{18},B_{7})$\\[4pt]
$(B_{7},B_{8})$&$\rho_{5,8}^{1,3}\;\rho_{5,7}^{1,4}\;\rho_{5,13}^{3,4}$&$(B_{8},x_{13,4})$\\[4pt]
$(B_{10},B_{7})$&$\rho_{4,6}^{3,4}$&$(B_{6},x_{6,4})$\\[4pt]
$(B_{14},B_{7})$&$\rho_{8,10}^{1,2}$&$(B_{17},x_{8,2})$\\[4pt]
$(B_{16},B_{7})$&$\rho_{8,9}^{1,2}\;\rho_{8,11}^{2,3}$&$(B_{10},B_{13})$\\[4pt]
$(B_{20},B_{7})$&$\rho_{8,15}^{1,2}$&$(B_{5},x_{8,2})$\\[4pt]
$(B_{11},B_{9})$&$\rho_{7,16}^{1,3}$&$(B_{12},B_{2})$\\[4pt]
$(B_{13},B_{9})$&$\rho_{8,16}^{1,3}$&$(B_{2},B_{7})$\\[4pt]
$(B_{11},B_{17})$&$\rho_{7,10}^{1,3}$&$(B_{12},B_{16})$\\[4pt]
$(B_{11},B_{21})$&$\rho_{7,14}^{2,3}$&$(B_{20},x_{7,2})$\\[4pt]
$(B_{13},B_{17})$&$\rho_{8,12}^{2,3}\;\rho_{8,12}^{1,2}$&$(B_{17},B_{7})$\\[4pt]
$(B_{13},B_{21})$&$\rho_{8,14}^{2,3}$&$(B_{20},x_{8,2})$\\[4pt]
\hline\end{tabular}
\caption{All two-edge pairs not already grounded by a common line.}\label{tab:seed-BB}
\end{table}

\subsection{Orthogonality between one-edges and two-edges}
There are $22\cdot21=462$ targets of this type. Of these, 232 share a row or column. Table~\ref{tab:seed-immediate} specifies another 120 one-rectangle reductions to a same-line pair or a pair of two-edges, already settled above. For a target $x_{rc},B_j$, the entry $j^{\epsilon}$ means: use the $\epsilon$-th cell of $B_j$ in increasing lexicographic order of its coordinates, form its rectangle with $(r,c)$, and take the companion diagonal. Thus the superscript is a choice of representative, not a power.

\begin{table}[t]\centering\small
\setlength{\tabcolsep}{8pt}\renewcommand{\arraystretch}{1.23}
\begin{tabular}{c p{11.8cm}}\hline
One-edge&Two-edge and representative choices for one transfer\\\hline

$x_{1,1}$&$8^{2},\;10^{1},\;11^{1},\;13^{1},\;14^{2},\;16^{1},\;20^{2}$\\[4pt]
$x_{1,3}$&$3^{2},\;4^{2},\;5^{2},\;7^{1},\;9^{2},\;17^{2},\;21^{2}$\\[4pt]
$x_{2,2}$&$12^{2},\;15^{2},\;18^{2},\;19^{1}$\\[4pt]
$x_{2,3}$&$4^{2},\;5^{2},\;7^{1},\;9^{2},\;17^{1},\;21^{2}$\\[4pt]
$x_{3,3}$&$3^{2},\;7^{2},\;9^{2},\;17^{2},\;21^{2}$\\[4pt]
$x_{3,4}$&$2^{1},\;9^{2},\;12^{1},\;20^{2}$\\[4pt]
$x_{4,1}$&$1^{2},\;8^{1},\;10^{1},\;11^{2},\;13^{2},\;14^{1},\;16^{2},\;20^{1}$\\[4pt]
$x_{4,2}$&$2^{2},\;11^{2},\;12^{2},\;15^{2},\;18^{2},\;19^{1}$\\[4pt]
$x_{5,1}$&$1^{1},\;10^{2},\;11^{1},\;13^{2},\;14^{2},\;16^{1},\;20^{2}$\\[4pt]
$x_{5,4}$&$2^{1},\;4^{2},\;6^{1},\;12^{2}$\\[4pt]
$x_{6,2}$&$2^{2},\;3^{2},\;11^{1},\;12^{2},\;13^{2},\;15^{2},\;18^{1},\;19^{1}$\\[4pt]
$x_{6,4}$&$2^{1},\;12^{1},\;15^{2}$\\[4pt]
$x_{7,2}$&$2^{2},\;3^{2},\;15^{2},\;18^{1},\;19^{1}$\\[4pt]
$x_{8,2}$&$2^{2},\;3^{2},\;12^{2},\;15^{2},\;18^{2},\;19^{1}$\\[4pt]
$x_{9,4}$&$2^{1},\;6^{1},\;12^{1},\;18^{1}$\\[4pt]
$x_{10,4}$&$2^{1},\;6^{1},\;12^{1},\;19^{1}$\\[4pt]
$x_{11,4}$&$2^{1},\;5^{2},\;6^{1},\;12^{1},\;14^{1}$\\[4pt]
$x_{12,4}$&$2^{1},\;6^{1},\;12^{1},\;14^{2}$\\[4pt]
$x_{13,4}$&$2^{1},\;4^{1},\;5^{2},\;6^{1},\;9^{1},\;15^{1},\;17^{1},\;18^{1},\;19^{1},\;21^{1}$\\[4pt]
$x_{14,4}$&$2^{1},\;6^{1},\;12^{1},\;18^{2}$\\[4pt]
$x_{15,4}$&$2^{1},\;4^{1},\;6^{1},\;12^{1},\;15^{2},\;21^{2}$\\[4pt]
$x_{16,4}$&$5^{1},\;6^{1},\;12^{1}$\\[4pt]
\hline\end{tabular}
\caption{The 120 nontrivial one-transfer targets after all two-edge pairs have been settled. Same-line targets are not repeated.}\label{tab:seed-immediate}
\end{table}

The remaining 110 targets occur in the transfer chains of Tables~\ref{tab:seed-chains-1}--\ref{tab:seed-chains-2}. For a fixed side column $c$, abbreviate $(x_{rc},B_j)$ by $j_r$. An arrow $j_r\leadsto k_s$ means that, for some column $a\ne c$, the displayed grid has $B_j$ at $(s,a)$ and $B_k$ at $(r,a)$; the rectangle on rows $r,s$ and columns $a,c$ gives the reduction. The two subscripts identify the rows, so the arrow is directly checkable in the seed matrix. The last state of each chain is either a same-line target or a target in Table~\ref{tab:seed-immediate}. Every intermediate target is thereby grounded, not just the initial state. In the two end steps where the singleton changes column, the notation $j_{r;c\prime}$ explicitly overrides the side column and means $(x_{r,c\prime},B_j)$. These steps use respectively $\rho_{3,4}^{3,4}$ and $\rho_{4,13}^{1,2}$. A line break only continues the same chain.

\begin{table}[t]\centering\small\setlength{\tabcolsep}{7pt}\renewcommand{\arraystretch}{1.16}
\begin{tabular}{c l}\hline Side column $c$&Transfer chains\\\hline

$4$&$\begin{gathered}14_{3}\leadsto 5_{10}\leadsto 17_{15}\leadsto 20_{12}\leadsto 19_{14}\leadsto 21_{12}\leadsto 17_{14}\leadsto\\[-1pt]\phantom{\leadsto}\;21_{10}\leadsto 14_{14}\leadsto 20_{9}\leadsto 16_{15}\leadsto 18_{10}\leadsto 17_{11}\leadsto 10_{12}\leadsto\\[-1pt]\phantom{\leadsto}\;19_{6}\leadsto 6_{12}\end{gathered}$\\[4pt]
$4$&$\begin{gathered}19_{3}\leadsto 4_{12}\leadsto 17_{16}\leadsto 9_{10}\leadsto 14_{5}\leadsto 8_{9}\leadsto 16_{13}\leadsto\\[-1pt]\phantom{\leadsto}\;12_{10}\end{gathered}$\\[4pt]
$4$&$\begin{gathered}14_{6}\leadsto 10_{9}\leadsto 16_{11}\leadsto 15_{10}\leadsto 17_{9}\leadsto 16_{12}\leadsto 19_{10}\end{gathered}$\\[4pt]
$4$&$\begin{gathered}21_{5}\leadsto 9_{14}\leadsto 21_{16}\leadsto 4_{14}\leadsto 21_{3}\leadsto 5_{14}\leadsto 21_{15}\end{gathered}$\\[4pt]
$4$&$\begin{gathered}19_{15}\leadsto 18_{12}\leadsto 19_{11}\leadsto 15_{12}\leadsto 19_{9}\leadsto 14_{12}\end{gathered}$\\[4pt]
$4$&$\begin{gathered}15_{14}\leadsto 20_{11}\leadsto 10_{15}\leadsto 18_{6}\leadsto 6_{11}\end{gathered}$\\[4pt]
$4$&$\begin{gathered}21_{9}\leadsto 16_{14}\leadsto 20_{10}\leadsto 14_{15}\leadsto 18_{9}\end{gathered}$\\[4pt]
$4$&$\begin{gathered}5_{12}\leadsto 17_{3}\leadsto 4_{10}\leadsto 14_{16}\leadsto 2_{9}\end{gathered}$\\[4pt]
$2$&$\begin{gathered}7_{6}\leadsto 6_{8}\leadsto 13_{4}\leadsto 7_{7}\leadsto 12_{8}\end{gathered}$\\[4pt]
$4$&$\begin{gathered}8_{6}\leadsto 10_{5}\leadsto 9_{11}\leadsto 18_{16}\leadsto 2_{15}\end{gathered}$\\[4pt]
$4$&$\begin{gathered}15_{3}\leadsto 4_{9}\leadsto 16_{16}\leadsto 2_{10}\end{gathered}$\\[4pt]
$4$&$\begin{gathered}15_{5}\leadsto 8_{11}\leadsto 10_{13}\leadsto 12_{6}\end{gathered}$\\[4pt]
$4$&$\begin{gathered}16_{5}\leadsto 9_{9}\leadsto 15_{16}\leadsto 2_{11}\end{gathered}$\\[4pt]
$4$&$\begin{gathered}17_{5}\leadsto 9_{12}\leadsto 19_{16}\leadsto 2_{12}\end{gathered}$\\[4pt]
$4$&$\begin{gathered}18_{3}\leadsto 4_{11}\leadsto 10_{16}\leadsto 2_{6}\end{gathered}$\\[4pt]
$4$&$\begin{gathered}18_{5}\leadsto 8_{15}\leadsto 20_{13}\leadsto 12_{14}\end{gathered}$\\[4pt]
$4$&$\begin{gathered}20_{6}\leadsto 10_{14}\leadsto 21_{11}\leadsto 18_{14}\end{gathered}$\\[4pt]
$4$&$\begin{gathered}16_{3}\leadsto 5_{9}\leadsto 15_{15}\end{gathered}$\\[4pt]
$4$&$\begin{gathered}16_{6}\leadsto 10_{10}\leadsto 14_{11}\end{gathered}$\\[4pt]
$4$&$\begin{gathered}19_{5}\leadsto 8_{12}\leadsto 17_{13}\end{gathered}$\\[4pt]
$4$&$\begin{gathered}20_{5}\leadsto 9_{15}\leadsto 5_{16}\end{gathered}$\\[4pt]
$4$&$\begin{gathered}4_{6}\leadsto 6_{3}\leadsto 7_{3;3}\end{gathered}$\\[4pt]
$4$&$\begin{gathered}8_{10}\leadsto 14_{13}\leadsto 12_{9}\end{gathered}$\\[4pt]
\hline\end{tabular}\caption{Grounded one-edge--two-edge chains, part 1. The notation $j_r$ means $(x_{rc},B_j)$ for the side column in the first column.}\label{tab:seed-chains-1}\end{table}

\begin{table}[t]\centering\small\setlength{\tabcolsep}{7pt}\renewcommand{\arraystretch}{1.16}
\begin{tabular}{c l}\hline Side column $c$&Transfer chains\\\hline

$3$&$\begin{gathered}1_{2}\leadsto 3_{1}\end{gathered}$\\[4pt]
$3$&$\begin{gathered}1_{3}\leadsto 5_{1}\end{gathered}$\\[4pt]
$4$&$\begin{gathered}10_{3}\leadsto 5_{11}\end{gathered}$\\[4pt]
$2$&$\begin{gathered}11_{2}\leadsto 3_{8}\end{gathered}$\\[4pt]
$2$&$\begin{gathered}13_{2}\leadsto 3_{7}\end{gathered}$\\[4pt]
$4$&$\begin{gathered}17_{6}\leadsto 6_{10}\end{gathered}$\\[4pt]
$4$&$\begin{gathered}20_{16}\leadsto 4_{15}\end{gathered}$\\[4pt]
$4$&$\begin{gathered}21_{6}\leadsto 6_{14}\end{gathered}$\\[4pt]
$2$&$\begin{gathered}3_{4}\leadsto 8_{4;1}\end{gathered}$\\[4pt]
$4$&$\begin{gathered}5_{5}\leadsto 9_{3}\end{gathered}$\\[4pt]
$4$&$\begin{gathered}5_{6}\leadsto 6_{15}\end{gathered}$\\[4pt]
$2$&$\begin{gathered}6_{2}\leadsto 2_{6}\end{gathered}$\\[4pt]
$2$&$\begin{gathered}6_{7}\leadsto 11_{4}\end{gathered}$\\[4pt]
$2$&$\begin{gathered}7_{2}\leadsto 2_{8}\end{gathered}$\\[4pt]
$4$&$\begin{gathered}8_{14}\leadsto 21_{13}\end{gathered}$\\[4pt]
$4$&$\begin{gathered}8_{16}\leadsto 2_{5}\end{gathered}$\\[4pt]
$4$&$\begin{gathered}8_{3}\leadsto 5_{13}\end{gathered}$\\[4pt]
$4$&$\begin{gathered}9_{6}\leadsto 6_{16}\end{gathered}$\\[4pt]
\hline\end{tabular}\caption{Grounded one-edge--two-edge chains, part 2. The notation $j_r$ means $(x_{rc},B_j)$ for the side column in the first column.}\label{tab:seed-chains-2}\end{table}

These three sets of one-edge--two-edge targets are disjoint and exhaustive: $232+120+110=462$. Their coverage can be checked by listing the twenty-two black-dot positions and the twenty-one pairs in the seed; every non-line target is explicitly in Table~\ref{tab:seed-immediate} or occurs in one of the chains. All arrows terminate at previously grounded values, so there is no use of an ungrounded cycle.

\subsection{Completion of the seed proof}
Take two distinct one-edges. A common row or column gives orthogonality immediately. Otherwise their rectangle has at least one companion cell belonging to a two-edge, since four one-edges would form a forbidden $C_4$. The companion pair is consequently either a one-edge--two-edge pair, two distinct two-edges, or the two halves of one resolved two-edge. Its correct prescribed value has already been proved in all three cases, and one rectangle transfer certifies the target. Thus every one-edge pair is orthogonal as well.

We have proved all $\binom{43}{2}=903$ obligations from explicit rules. Together with simplicity, $|E_1|=22=z(16,4)$, and the one-edge skeleton visible in the grid, this proves $K_0$ is admissible and extremal. This finite base and the arbitrary-index arguments of Section~\ref{sec:general} provide a self-contained mathematical proof of Theorem~\ref{thm:main}.

\end{document}